\newtheorem{theorem}{Theorem}[section]
\newtheorem{lemma}[theorem]{Lemma}
\newtheorem{proposition}{Proposition}
\theoremstyle{definition}
\newtheorem{remark}{Remark}
 \newcommand{\vstar}{v_{*}}
 \newcommand{\p}{\partial_v}
 \newcommand{\s}{\mathcal{S}}
 \newcommand{\Real}{\mathbb{R}}
 \newcommand{\Natural}{\mathbb{N}}
 \newcommand{\norm}[1]{\Vert#1\Vert}
 \newcommand{\abs}[1]{\left\vert#1\right\vert}
 \newcommand{\set}[1]{\left\{#1\right\}}
 \newcommand{\bigset}[1]{\big\{#1\big\}}
 \newcommand{\inner}[1]{\left(#1\right)}
 \newcommand{\biginner}[1]{\big(#1\big)}
 \newcommand{\com}[1]{\big[#1\big]}
 \newcommand{\reff}[1]{(\ref{#1})}
 \newcommand{\V}{\,\,|\,\,\,}
\begin{document}
 \title[Propagation of Gevrey Regularity for  Solutions of Landau Equations]
      {Propagation of Gevrey Regularity for  Solutions of Landau Equations}

\author{Hua Chen, Wei-Xi Li \and Chao-Jiang Xu}

\subjclass[2000]{Primary: 35B65, 76P05} \keywords{Landau equation,
Boltzmann equation, Gevrey regularity}

 \address{School of Mathematics and Statistics, Wuhan University, Wuhan 430072, P. R. China}
 \email{chenhua@whu.edu.cn}

\address{School of Mathematics and Statistics, Wuhan University,Wuhan 430072, P. R. China}
 \email{weixi.li@yahoo.com}

 \address{School of Mathematics and Statistics, Wuhan University, Wuhan 430072, P. R. China \newline
 \indent and \newline \indent Universit\'{e} de Rouen, UMR 6085-CNRS, Math\'{e}matiques, Avenue de
l'Universit\'{e}, BR.12, \newline \indent F76801 Saint Etienne du
Rouvray, France}
 \email{Chao-Jiang.Xu@univ-rouen.fr}

\thanks{This work is partially supported by the NSFC}
\begin{abstract}
By using the energy-type inequality, we obtain, in this paper, the
result on propagation of Gevrey regularity for the solution of the
spatially homogeneous Landau equation in the cases of Maxwellian
molecules and hard potential.
\end{abstract}

\maketitle

\section{introduction}

There are many papers concerning the propagation of regularity for
the solution of the Boltzmann equation (cf. \cite{DM,DV,Gb,Ga,MV}
and references therein). In these works, it has been shown that the
Sobolev or Lebesgue regularity satisfied by the initial datum is
propagated along the time variable. The solutions having the Gevrey
regularity for a finite time have been constructed in \cite{Ukai84}
in
 which the initial data has the same Gevrey regularity. Recently, the
uniform propagation in all time of the Gevrey regularity has been
proved in \cite{des-fur-ter} in the case of Maxwellian molecules,
which was based on the Wild expansion and the characterization of
the Gevrey regularity by the Fourier transform.

In this paper, we study the propagation of Gevrey regularity for the
solution of Landau equation, which is the limit of the Boltzmann
equation when the collisions become grazing, see \cite{D92} for more
details. Also we know that the Landau equation can be regarded as a
non-linear and non-local analog of the hypo-elliptic Fokker-Planck
equation, and if we choose a suitable orthogonal basis, the Landau
equation in the Maxwellian molecules case will become a non-linear
Fokker-Planck equation (cf. \cite{Villani98}). Recently, a lot of
progress on the Sobolev regularity has been made for the spatially
homogeneous and inhomogeneous Landau equations, cf. \cite{C-D-H, DV,
G, Villani98b} and references therein. On the other hand, in the
Gevrey class frame, the local Gevrey regularity for all variables
$t,x,v$ is obtained in \cite{chen-li-xu} for some semi-linear
Fokker-Planck equations.

Let us consider the following Cauchy problem for  the spatially
homogeneous Landau equation,
\begin{equation}\label{Landau}
\left\{
\begin{array}{ll}
   \partial_t f=\nabla_{v}\cdot\bigset{\int_{\Real^n}
   a(v-\vstar)[f(\vstar)\nabla_v
   f(v)-f(v)\nabla_{v}f(\vstar)]d\vstar},\\
  f(0,v)=f_0(v),
\end{array}
\right.
\end{equation}
where $f(t,v)\geq 0$ stands for the density of particles with
velocity $v\in\Real^n$ at time $t\geq0$, and $(a_{ij})$ is a
nonnegative symmetric matrix given by
\begin{equation}\label{coe}
  a_{ij}(v)=\inner{\delta_{ij}-\frac{v_iv_j}{\abs v^2}}\abs
  v^{\gamma+2}, ~~~~\gamma\in[0,1].
\end{equation}
Here and throughout the paper, we consider only the hard potential
case (i.e. $\gamma\in(0, 1]$) and the Maxwellian molecules case
(i.e. $\gamma=0$).

Set
 \begin{align*}
 b_i=\sum_{j=1}^n\partial_{v_j}a_{ij}=-2\abs v^\gamma v_i, \quad i=1,2,\cdots,n;
 \quad c=\sum_{i,j=1}^n\partial_{v_iv_j}a_{ij}=-2(\gamma+3)\abs
 v^\gamma,\\
 \bar a_{ij}(t,v)=\inner{a_{ij}*f}(t,v)=\int_{\Real^n}a_{ij}(v-\vstar)f(t,\vstar)d\vstar,
 \quad \bar b_i=b_i*f, \quad \bar c=c*f.
 \end{align*}
Then the Cauchy problems \reff{Landau} can be rewritten in the
following form:
\begin{equation}\label{Landau'}
\left\{
  \begin{array}{ll}
  \partial_t f=\sum_{i,j=1}^n
  \bar a_{ij}\partial_{v_iv_j}f-\bar cf,\\
  f(0,v)=f_0(v),
  \end{array}
\right.
\end{equation}
which is a non-linear  diffusion equation with the coefficients
$\bar a_{ij}$ and $\bar c$ depending on the solution $f.$

The motivation for studying the Cauchy problem \reff{Landau'} (cf.
\cite{MX}) comes from the study of the inhomogenous Boltzmann
equations without angular cutoff and non linear Vlasov-Fokker-Planck
equation (see \cite{HN, HF}).

Throughout the paper, for a mult-index
$\alpha=(\alpha_1,\alpha_2,\cdots,\alpha_n)$ and an integer $k$ with
$0\leq k\leq\abs\alpha,$  the notation $D^{\abs\alpha-k}$ is always
used to denote $\p^\gamma$ with the multi-index
 $\gamma$ satisfying $\gamma\leq\alpha$ and
$\abs\gamma=\abs\alpha-k.$   We denote also by $M(f(t)), E(f(t))$
and $H(f(t))$  as the mass, energy and entropy respectively for the
function $f(t,\cdot)$ . That is,
\begin{align*}
M(f(t))=\int_{\Real^n}f(t,v)\,dv,\quad
E(f(t))={1\over2}\int_{\Real^n}f(t,v)\abs v^2\,dv,
\end{align*}
\[H(f(t))=\int_{\Real^n}f(t,v)\log f(t,v)\,dv.\]
Denote here $M_0=M(f(0)), E_0=E(f(0))$ and $H_0=H(f(0)).$  It's
known that the solutions of the Landau equation satisfy the formal
conservation laws:
\[M(f(t))=M_0 ,\quad
E(f(t))=E_0,\quad H(f(t))\leq H_0,\qquad \mbox{for  }\forall ~~t\geq
0.\]

Also in this paper we use the following notations
\[
 \norm {f(t,\cdot)}_{L_s^1}=\int_{\Real^n}f(t,v)\inner{1+\abs v^2}^{s/2}dv,
\]
\[ \norm {\p^\alpha f(t,\cdot)}_{L^2_s}^2=\int_{\Real^n}
 \abs{\p^\alpha f(t,v)}^2\inner{1+\abs v^2}^{s/2}dv,\]
 \[\norm{f(t,\cdot)}_{H^m_s}^2=\sum_{0\leq\abs\alpha\leq m}\norm{\p^\alpha f(t)}_{L^2_s}^2.\]
When there is no risk causing confusion, we write $\norm
{g(t)}_{L_s^1}$ for $\norm {g(t,\cdot)}_{L_s^1}$.

 Next, let us recall the
definition of the Gevrey class function space $G^\sigma(\Real^N),$
where $\sigma\geq 1$ is the Gevrey index (cf. \cite{Rodino93}). Let
$u\in C^\infty(\Real^N)$. We say $u\in G^\sigma(\Real^N)$ if there
exists a constant $C$, called the Gevrey constant,  such that for
all multi-indices $\alpha\in\Natural^N,$
\[
  \norm{\partial^\alpha{u}}_{L^2(\Real^N)}\leq C^{|\alpha|+1}
  (\abs\alpha!)^\sigma.
\]
We denote by $G_0^\sigma(\Real^N)$ the space of Gevrey function with
compact support. Note that $G^1(\Real^N)$ is  space of all real
analytic functions.

 In the hard potential case, the existence, uniqueness
and Sobolev regularity of the weak solution  had been studied in
\cite{DV}, in which they proved that, under suitable assumptions on
the initial datum (e.g. $f_0\in L^1_{2+\delta}$ with $\delta>0,$)
there exists a unique weak solution of the Cauchy problem
\reff{Landau'}, which moreover is in the space
$C^{\infty}\biginner{\Real_t^+; \s(\Real^3)}$. Here $\Real_t^+=(0,
+\infty)$ and $\s$ denotes the space of smooth functions which decay
rapidly at infinity.

 \smallskip
Assuming the existence of  the smooth solution,  we state now the
main result of the paper as follows:

\begin{theorem}\label{main result}
Let $f_0$ be an initial datum with finite mass, energy and entropy.
Suppose $f_0\in G^\sigma(\Real^n)$ with $\sigma>1,$ and  $f$ is a
solution of the Cauchy problem \reff{Landau'} which satisfies
\begin{equation}\label{space}
 f(t,v)\in
  L^\infty_{loc}\inner{[0,+\infty[;~
  H^ m(\Real^n)}\bigcap L_{loc}^2\inner{\,[0,+\infty[;~
  H^{m+1}_\gamma(\Real^n)},\quad \mbox{for all }~m\geq 0.
 \end{equation}
Then $f(t,\cdot)\in G^\sigma(\Real^n)$ for all $t>0$ uniformly,
namely, the Gevrey constant of $f(t,\cdot)$ is independent of $t$.
 More precisely, for any fixed  $T>0$, there exists a constant $C>0$ which is
independent of $t$, such that for any multi-index $\alpha$, one has
\begin{equation*}
 \quad \sup_{t\in[0,T]}\norm{\p^\alpha f(t,\cdot)}_{L^2(\Real^n)}
 \leq
 C^{\abs\alpha+1}\inner{\abs\alpha!}^\sigma.
\end{equation*}

\end{theorem}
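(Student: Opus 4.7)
The plan is to establish the stated bound by strong induction on $N=|\alpha|$, built on an energy-plus-dissipation estimate for each derivative $\partial^\alpha f$. Concretely, my target is to find constants $A,K_0>0$ (depending on $T$, on the mass/energy/entropy of $f_0$, on the coercivity constant of $(\bar a_{ij})$, and on the Sobolev norms of $f$ from \reff{space}) such that
\begin{equation*}
\sup_{t\in[0,T]}\|\partial^\alpha f(t)\|_{L^2}^2 + K_0\int_0^T \|\nabla\partial^\alpha f(t)\|_{L^2_\gamma}^2\,dt \;\leq\; A^{2(N+1)}\bigl(N!\bigr)^{2\sigma}
\end{equation*}
for every multi-index with $|\alpha|=N$. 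For $N\leq N_0$ (a threshold fixed by $n$ and $\gamma$) this is just a consequence of the Sobolev regularity assumed in \reff{space}, provided $A$ is chosen large enough.

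For the inductive step, I apply $\partial^\alpha$ to \reff{Landau'}, multiply by $\partial^\alpha f$, integrate by parts, and use $b_i=\sum_j\partial_{v_j}a_{ij}$ to rewrite the principal term as $-\int \bar a_{ij}\,\partial_i\partial^\alpha f\,\partial_j\partial^\alpha f\,dv$ plus first-order corrections. The classical Desvillettes--Villani coercivity $\sum \bar a_{ij}(t,v)\xi_i\xi_j\geq K_0\langle v\rangle^\gamma|\xi|^2$, valid under the $M_0,E_0,H_0$ control, then produces the dissipation $K_0\|\nabla\partial^\alpha f(t)\|^2_{L^2_\gamma}$. Everything else is a Leibniz commutator
\begin{equation*}
\sum_{0<\beta\leq\alpha}\binom{\alpha}{\beta}\int\!\Bigl[(\partial^\beta\bar a_{ij})\,\partial^{\alpha-\beta}\partial_{ij}f \;-\; (\partial^\beta\bar c)\,\partial^{\alpha-\beta}f\Bigr]\partial^\alpha f\,dv,
\end{equation*}
which I estimate by splitting $\partial^\beta\bar a_{ij}=(\partial^{\beta_1}a_{ij})*(\partial^{\beta_2}f)$ with $\beta_1+\beta_2=\beta$ chosen so that $\partial^{\beta_1}a_{ij}$ has enough integrability at infinity (Young's inequality) while $\partial^{\beta_2}f$ is controlled by the induction hypothesis. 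One of the two derivatives in $\partial^{\alpha-\beta}\partial_{ij}f$ can be integrated by parts either onto $\partial^\beta\bar a_{ij}$ (useful when $|\beta|$ is small) or onto $\partial^\alpha f$, so that, after Cauchy--Schwarz, the top-order factor is absorbed into the dissipation on the left.

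Closing the induction then reduces to a combinatorial Gevrey lemma: plugging the inductive bound into the commutator estimate produces a sum of the form $\sum_{0<\beta\leq\alpha}\binom{|\alpha|}{|\beta|}(|\beta|!)^\sigma\bigl((|\alpha|-|\beta|+c)!\bigr)^\sigma$ for a fixed shift $c$ accounting for the two extra derivatives in the diffusion term, and this must be bounded by a constant multiple of $(N!)^{2\sigma}$ times a manageable power of $N$. Here the hypothesis $\sigma>1$ enters crucially, via the convergence of $\sum k^{-\sigma}$. A Gr\"onwall argument in $t$ then closes the bound, once $A$ is taken large enough to dominate both the Sobolev norms of \reff{space} at low orders and the universal constants in the combinatorial estimate. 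I expect the principal obstacles to be (i) organising the commutator estimates so that in every term at most one factor carries the full $N$ derivatives while the others are strictly lower order (so that the induction hypothesis applies), and (ii) handling the weighted $L^2_\gamma$ discrepancies produced by the $|v|^\gamma$ growth in $a_{ij}$, $b_i$ and $c$ in the hard-potential case, by absorbing them either into the dissipation or into the $H^{m+1}_\gamma$-regularity already provided by \reff{space}.
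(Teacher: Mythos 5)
Your overall architecture coincides with the paper's: an induction on the derivative order that carries both $\sup_{t\in[0,T]}\norm{\partial_v^\alpha f(t)}_{L^2}$ and the time-integrated dissipation $\int_0^T\norm{\nabla_v\partial_v^\alpha f}^2_{L^2_\gamma}\,dt$ (this is exactly the paper's Proposition \ref{prp}), the Desvillettes--Villani coercivity of $(\bar a_{ij})$ for the principal term, integration by parts plus Cauchy--Schwarz to absorb the top-order factor into the dissipation, and a combinatorial Gevrey lemma exploiting $\sigma>1$. The one step in your plan that does not survive scrutiny is the treatment of $\partial_v^\beta\bar a_{ij}$ for large $\abs\beta$ by a single splitting $(\partial_v^{\beta_1}a_{ij})*(\partial_v^{\beta_2}f)$ plus Young's inequality. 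For $\gamma\in(0,1]$ the kernel $a_{ij}$ is homogeneous of degree $\gamma+2$, so $\partial_v^{\beta_1}a_{ij}$ is homogeneous of degree $\gamma+2-\abs{\beta_1}$: once $\abs{\beta_1}>\gamma+2$ it is singular at the origin, and for $\abs{\beta_1}$ large it is not even locally integrable, so you cannot put most of the derivatives on $a_{ij}$ globally. If instead you put only two derivatives on $a_{ij}$, the factor $\partial_v^{\beta_1}a_{ij}$ grows like $(1+\abs v^2)^{\gamma/2}$ and lies in no $L^p$, Young's inequality is unavailable, and bounding the convolution against $\partial_v^{\beta_2}f$ would require weighted $L^1$ (moment) control of high derivatives of $f$ --- which your $L^2$-based induction hypothesis does not supply.

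The paper's resolution (Lemma \ref{a}) is a cutoff decomposition $a_{ij}=\psi a_{ij}+(1-\psi)a_{ij}$ with $\psi$ a compactly supported Gevrey cutoff. On the compact piece exactly two derivatives fall on $\psi a_{ij}$ and $\abs\beta-2$ on $f$, giving $C\norm{D^{\abs\beta-2}f}_{L^2}$ by Cauchy--Schwarz over a ball; on the far piece \emph{all} $\abs\beta$ derivatives fall on $(1-\psi)a_{ij}$ and none on $f$, using only $\norm{f}_{L^1_\gamma}\leq M_0+2E_0$, at the price of a factor $\tilde B^{\abs\beta}(\abs\beta!)^\sigma(1+\abs v^2)^{\gamma/2}$ coming from the homogeneity of $a_{ij}$ away from the origin. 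This is why the coefficient bound contains the additive term $B^{\abs\beta-2}\com{(\abs\beta-2)!}^\sigma$ in $\com{G_\sigma(f)}_{\abs\beta-2}$ --- a ``pure Gevrey'' contribution controlled by no derivative of $f$ --- which must be carried through the entire induction and beaten by the combinatorial lemma (Lemma \ref{0806152}). Your plan, in which every commutator factor is a derivative of $f$ handled by the induction hypothesis, omits this contribution; without it, or an equivalent device, the commutator estimate does not close. The remaining ingredients of your plan (the base case from \reff{space}, using the previous step's dissipation integral rather than a genuine Gr\"onwall argument, the sums over $\beta$ controlled because $\sigma>1$) match the paper up to cosmetic differences.
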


\begin{remark}

If $f_0\in L^1_{2+\delta}$ additionally, then by the result of
\cite{DV}, the Cauchy problem \reff{Landau'} admits a solution which
satisfies \reff{space}.

\end{remark}

\begin{remark}

For simplicity, we shall prove Theorem \ref{main result} in the case
of space dimension $n=3.$  The conclusion for  general cases can be
deduced similarly.

\end{remark}

The plan of the paper is as follows: In Section 2 we prove some
lemmas. Section 3 is devoted to the proof of the main result.

\vskip 5ex
\section{Some Lemmas}
\label{section2}

In this section we give some lemmas, which will be used in the proof
of the main result.

\begin{lemma}\label{0806152}
For any $\sigma>1,$ there exists a constant $C_\sigma,$ depending
only on $\sigma,$ such that for all multi-indices $\mu\in\Natural^3,
\abs\mu\geq 1,$
\begin{equation}\label{sum1}
  \sum_{1\leq \abs\beta\leq \abs\mu}\frac{1}{\abs\beta^{3}}\leq
  C_\sigma\abs\mu^{\sigma-1},
\end{equation}
and
\begin{equation}\label{sum2}
  \sum_{1\leq \abs\beta\leq\abs\mu-1}\frac{1}{\abs\beta^2(\abs\mu-\abs\beta)}\leq
  C_\sigma\abs\mu^{\sigma-1}.
\end{equation}
Here and throughout the paper, the notation $\sum_{1\leq
\abs\beta\leq \abs\mu}$ denotes the summation over all the
multi-indices satisfying $\beta\leq\mu$ and $1\leq
\abs\beta\leq\abs\mu$. Also the notation $\sum_{1\leq \abs\beta\leq
\abs\mu-1}$ denotes the summation over all the multi-indices
satisfying $\beta\leq\mu$ and $1\leq \abs\beta\leq\abs\mu-1.$
\end{lemma}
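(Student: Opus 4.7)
\medskip

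\noindent\textbf{Proof plan for Lemma \ref{0806152}.}

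The plan is to reduce both sums to the scalar harmonic sum $\sum_{k=1}^{|\mu|} 1/k$, which is $O(\log |\mu|)$ and hence dominated by $C_\sigma |\mu|^{\sigma-1}$ for every $\sigma>1$. The only combinatorial input I need is the standard counting estimate: in $\mathbb{N}^3$ the number of multi-indices $\beta$ with $|\beta|=k$ equals $\binom{k+2}{2}$, and in particular
\[
 \#\{\beta\in\Natural^3 :\ \beta\leq\mu,\ \abs\beta=k\} \leq \binom{k+2}{2} \leq C k^2
\]
for some absolute constant $C$ and every $k\geq 1$.

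For \reff{sum1}, I would group the terms of the sum according to $|\beta|=k$ and use the counting bound above:
\[
 \sum_{1\leq \abs\beta\leq \abs\mu}\frac{1}{\abs\beta^{3}}
 = \sum_{k=1}^{\abs\mu} \frac{\#\{\beta\leq\mu :\ \abs\beta=k\}}{k^3}
 \leq \sum_{k=1}^{\abs\mu} \frac{C k^2}{k^3}
 = C \sum_{k=1}^{\abs\mu} \frac{1}{k} \leq C\bigl(1+\log\abs\mu\bigr).
\]
Since $\sigma-1>0$, the elementary inequality $1+\log x \leq C_\sigma x^{\sigma-1}$ for all $x\geq 1$ yields \reff{sum1}.

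For \reff{sum2}, I would again group by $|\beta|=k$, so that the inner factor $|\mu|-|\beta|=|\mu|-k$ depends only on $k$:
\[
 \sum_{1\leq \abs\beta\leq \abs\mu-1}\frac{1}{\abs\beta^2(\abs\mu-\abs\beta)}
 \leq \sum_{k=1}^{\abs\mu-1}\frac{C k^2}{k^2(\abs\mu-k)}
 = C\sum_{k=1}^{\abs\mu-1}\frac{1}{\abs\mu-k}
 = C\sum_{j=1}^{\abs\mu-1}\frac{1}{j},
\]
after the change of index $j=|\mu|-k$. This is again $O(\log|\mu|)$, hence bounded by $C_\sigma |\mu|^{\sigma-1}$ by the same elementary inequality.

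There is no real obstacle; the only point to watch is that the counting estimate $\#\{\beta\leq\mu,\,|\beta|=k\}\leq Ck^2$ is precisely what cancels the $1/k^2$ factors and reduces both displays to harmonic sums. The argument is dimension-specific only through the exponent $2$, and the same method would work in $\Real^n$ provided the powers of $|\beta|$ in the denominators exceed $n-1$ in the first sum and $n-1$ in the weighted sense of the second.
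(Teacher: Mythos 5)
Your proof is correct and follows essentially the same route as the paper: count the multi-indices with $\abs\beta=k$ in dimension $3$ by $\binom{k+2}{2}\leq Ck^2$, reduce each sum to a harmonic sum, and absorb the logarithm into $C_\sigma\abs\mu^{\sigma-1}$. The only difference is that you write out the second estimate explicitly, whereas the paper dismisses it with ``similarly.''
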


\begin{proof} For each positive integer $l,$ we denote by
$N\set{\abs\beta=l}$ the number of the multi-indices $\beta$ with
$\abs\beta=l.$ In the case when the space dimension is 3, one has
\[N\set{\abs\beta=l}=\frac{(l+2)!}{2!\,l!}={1\over2}(l+1)(l+2).\]
It is easy to deduce that
\begin{align*}
  \sum_{1\leq \abs\beta\leq
  \abs\mu}\frac{1}{\abs\beta^{3}}\leq\sum_{l=1}^{
  \abs\mu}\sum_{\abs\beta=l}\frac{1}{l^{3}}=\sum_{l=1}^{
  \abs\mu}\frac{N\set{\abs\beta=l}}{l^{3}}.
\end{align*}
Combining the estimates above, it holds that
\begin{align*}
  \sum_{1\leq \abs\beta\leq
  \abs\mu}\frac{1}{\abs\beta^{3}}\leq{1\over2}\sum_{l=1}^{
  \abs\mu}\frac{(l+1)(l+2)}{l^{3}}\leq 3\sum_{l=1}^{
  \abs\mu}\frac{1}{l}.
\end{align*}
Observing that $3\sum_{l=1}^{
  \abs\mu}l^{-1}\leq C_\sigma\abs\mu^{\sigma-1}$  for some constant
  $C_\sigma$, we obtain the desired estimate \reff{sum1}.
  Similarly we can deduce the estimate \reff{sum2}.
  \end{proof}

The following lemma is crucial to the proof of Theorem \ref{main
result}.

\begin{lemma}\label{crucial}
Let $\sigma>1$. There exist constant $C_1, C_2>0,$ depending only on
$M_0, E_0, H_0$ and $\gamma,$  such that for all multi-indices
$\mu\in\Natural^3$ with $\abs\mu\geq2$ and all $t>0,$ we have
\begin{align*}
 \partial_t&\norm{\p^\mu f(t)}_{L^2}^2+C_1\norm{\nabla_v\p^\mu  f(t) }_{L^2_\gamma}^2
  \leq C_2\abs\mu^2\norm{\nabla_vD^{\abs\mu-1} f(t)}_{L^2_\gamma}^2\\
  &+C_2 \sum_{2\leq\abs\beta\leq\abs\mu}
  C_{\mu }^\beta \norm{\nabla_vD^{\abs\mu -\abs\beta+1} f(t)}_{L^2_\gamma}
  \cdot\norm{\nabla_vD^{\abs\mu-1}
  f(t)}_{L^2_\gamma}\cdot\com{G_\sigma(f(t))}_{\abs\beta-2}\\
  &+C_2\sum_{0\leq\abs\beta\leq\abs\mu}
   C_{\mu }^\beta\norm{
  \p^\beta f(t)}_{L^2_\gamma}\cdot\norm{\nabla_vD^{\abs\mu-1} f(t)}_{L^2_\gamma}
  \cdot\com{G_\sigma(f(t))}_{\abs{\mu}-\abs\beta},
\end{align*}
where $C_\mu^\beta=\frac{\mu!}{(\mu-\beta)!\beta!}$ is the binomial
coefficient, and
$\com{G_\sigma(f(t))}_{|\nu|}=\norm{\p^{\nu}f(t)}_{L^2}+B^{|\nu|}\inner{|\nu|!}^\sigma$
with $B$ being the constant as given in Lemma \ref{a} below.
\end{lemma}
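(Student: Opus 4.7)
The plan is to differentiate equation \reff{Landau'} $\mu$ times in $v$, test in $L^2(\Real^3)$ against $\p^\mu f$, and split the resulting nonlinear right-hand side via the Leibniz formula into four groups according to the size of the multi-index $\beta$ falling on the coefficient $\bar a_{ij}$ or $\bar c$: the $\abs\beta=0$ diffusion piece (providing the coercive $C_1$-term via the Landau ellipticity estimate), the $\abs\beta=1$ piece (absorbed partly into the coercive term and producing the first term $C_2\abs\mu^2\norm{\nabla_v D^{\abs\mu-1}f}_{L^2_\gamma}^2$ on the right via Young's inequality), the $\abs\beta\geq 2$ diffusion piece (producing the second sum), and the $\p^\mu(\bar cf)$ piece (producing the third sum).

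Concretely, after differentiating and testing against $\p^\mu f$, one uses
\[
  \p^\mu(\bar a_{ij}\,\partial_{v_iv_j}f)=\sum_{\beta\leq\mu}C_\mu^\beta\,\p^\beta\bar a_{ij}\cdot\p^{\mu-\beta}\partial_{v_iv_j}f,\qquad \p^\mu(\bar c\,f)=\sum_{\beta\leq\mu}C_\mu^\beta\,\p^\beta\bar c\cdot\p^{\mu-\beta}f
\]
to arrive at an identity for $\tfrac12\tfrac{d}{dt}\norm{\p^\mu f(t)}_{L^2}^2$. The $\abs\beta=0$ diffusion contribution, after one integration by parts in $v_j$ and use of $\sum_j\partial_{v_j}\bar a_{ij}=\bar b_i$, equals $-\int\bar a_{ij}\partial_{v_i}\p^\mu f\,\partial_{v_j}\p^\mu f\,dv+\tfrac12\int\bar c\,\biginner{\p^\mu f}^2\,dv$; the Landau coercivity estimate (with constants depending only on $M_0, E_0, H_0$) applied to the first of these supplies the coercive lower bound $C_1\norm{\nabla_v\p^\mu f}_{L^2_\gamma}^2$, at the cost of an $O\biginner{\norm{\p^\mu f}_{L^2_\gamma}^2}$ error which is absorbed into the $\beta=\mu$ summand of the third sum.

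For the remaining diffusion terms, the pointwise inequalities $\abs{\p^{\mu-\beta}\partial_{v_iv_j}f}\leq\abs{\nabla_v D^{\abs\mu-\abs\beta+1}f}$ and $\abs{\p^\mu f}\leq\abs{\nabla_v D^{\abs\mu-1}f}$ (immediate from the definition of $D^k$) reduce the estimate of each summand to an estimate of $\p^\beta\bar a_{ij}$. When $\abs\beta=1$, the convolution identity $\p^\beta\bar a_{ij}=a_{ij}*\p^\beta f$ combined with Lemma \ref{a} yields a bound in a weighted $L^\infty$-norm depending only on $M_0,E_0$; summing the $\abs\mu$ such summands and applying Cauchy--Schwarz gives a contribution of size $C\abs\mu\norm{\nabla_v D^{\abs\mu}f}_{L^2_\gamma}\norm{\nabla_v D^{\abs\mu-1}f}_{L^2_\gamma}$, after which Young's inequality absorbs $\norm{\nabla_v D^{\abs\mu}f}_{L^2_\gamma}^2=\norm{\nabla_v\p^\mu f}_{L^2_\gamma}^2$ into the coercive term and leaves behind the first term $C_2\abs\mu^2\norm{\nabla_v D^{\abs\mu-1}f}_{L^2_\gamma}^2$. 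When $\abs\beta\geq 2$, I redistribute exactly two derivatives by writing $\p^\beta\bar a_{ij}=\biginner{\p^{\beta_1}a_{ij}}*\p^{\beta_2}f$ with $\abs{\beta_1}=2$ and $\abs{\beta_2}=\abs\beta-2$; the kernel $\p^{\beta_1}a_{ij}$ grows like $\abs v^\gamma$ (it is of the same type as $c$), so Lemma \ref{a} controls the convolution in a weighted $L^\infty$-norm by $\com{G_\sigma(f(t))}_{\abs\beta-2}$, and Cauchy--Schwarz then delivers the second sum.

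The $\p^\mu(\bar cf)$ contribution is treated analogously: expanding by Leibniz, estimating $\p^\beta\bar c=c*\p^\beta f$ via Lemma \ref{a} in terms of $\com{G_\sigma(f(t))}_{\abs\beta}$, pairing the remaining $\p^{\mu-\beta}f$ factor with $\p^\mu f$ controlled by $\abs{\nabla_v D^{\abs\mu-1}f}$, and relabelling $\beta\mapsto\mu-\beta$, yields the third sum with placeholder $\com{G_\sigma(f(t))}_{\abs\mu-\abs\beta}$. The main obstacle is the combinatorial bookkeeping for the second sum: one must move exactly two and no more derivatives from $f$ onto $a_{ij}$, so that the placeholder carries the factorial $\biginner{(\abs\beta-2)!}^\sigma$ rather than $(\abs\beta!)^\sigma$. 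This two-factorial saving is structurally available because $a_{ij}$ is the second-order kernel of a diffusion operator, and it is precisely this saving that will allow the Gevrey induction in Section 3 to absorb the binomial weights $C_\mu^\beta$ and close without the Gevrey constant $B$ blowing up.
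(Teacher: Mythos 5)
Your overall architecture (Leibniz expansion of $\p^\mu(\bar a_{ij}\partial_{v_iv_j}f-\bar cf)$, splitting by the number of derivatives falling on the coefficients, ellipticity for the $\beta=0$ block, Lemmas \ref{a} and \ref{c} for $\abs\beta\geq2$ and for the $\bar c$-terms) is exactly the paper's, and your $\beta=0$ computation and your treatment of the sums over $\abs\beta\geq 2$ are fine. But there is a genuine gap in your treatment of the $\abs\beta=1$ diffusion block. You propose to keep $\partial_{v_iv_j}\p^{\mu-\beta}f$ intact, bound $\p^\beta\bar a_{ij}$ pointwise by $C(1+\abs v^2)^{\gamma/2}$, apply Cauchy--Schwarz in $L^2_\gamma\times L^2_\gamma$, and then absorb $\norm{\nabla_v\p^\mu f}_{L^2_\gamma}^2$ by Young. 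The pointwise bound is false: $a_{ij}$ is homogeneous of degree $\gamma+2$, so for $\abs\beta=1$ one only has $\abs{\p^\beta\bar a_{ij}(v)}=\abs{(\p^\beta a_{ij})*f(v)}\leq C\norm{f}_{L^1_{\gamma+1}}(1+\abs v^2)^{(\gamma+1)/2}$, one full power of $\abs v$ worse than what the two $L^2_\gamma$ weights can carry (this already fails for $\gamma=0$). Consequently your Cauchy--Schwarz step lands in $L^2_{\gamma+1}$ rather than $L^2_\gamma$, and the Young absorption into the coercive term $C_1\norm{\nabla_v\p^\mu f}_{L^2_\gamma}^2$ does not close. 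Note also that Lemma \ref{a} is stated only for $\abs\beta\geq2$, so it cannot be invoked here. The paper's fix is structural: it integrates by parts so that the $\abs\beta=1$ block splits into a term carrying $\p^\beta\bar b_j$ (one derivative of the degree-$(\gamma+1)$ kernel, hence genuinely $O((1+\abs v^2)^{\gamma/2})$) and a term $(II)_2$ carrying $\p^\beta\bar a_{ij}$ against $\partial_{v_j}\p^{\mu-\beta}f\cdot\partial_{v_i}\p^\mu f$; the latter is then resolved by the symmetrization identity $(II)_2=-(II)_2+\sum\int(\p^{2\beta}\bar a_{ij})(\partial_{v_j}\p^{\mu-\beta}f)(\partial_{v_i}\p^{\mu-\beta}f)\,dv$, so that only \emph{second} derivatives of $\bar a_{ij}$ ever need a pointwise bound, and both $f$-factors are of order $\abs\mu$ --- no Young absorption is needed at all. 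Without this (or an equivalent device) your argument does not establish the stated inequality.

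A secondary, less serious inaccuracy: for $\abs\beta\geq2$ you describe Lemma \ref{a} as ``redistributing exactly two derivatives'' via $\p^\beta\bar a_{ij}=(\p^{\beta_1}a_{ij})*\p^{\beta_2}f$ with $\abs{\beta_1}=2$. That identity alone cannot yield the lemma, because $\p^{\beta_1}a_{ij}$ grows like $\abs v^\gamma$ at infinity and its convolution with $\p^{\beta_2}f$ is not controlled by $\norm{\p^{\beta_2}f}_{L^2}$; the actual mechanism is a Gevrey cutoff $\psi$ splitting $a_{ij}=\psi a_{ij}+(1-\psi)a_{ij}$, with two derivatives moved onto $f$ only in the compactly supported near part, while the far part keeps all $\abs\beta$ derivatives on the (analytic away from the origin) kernel and uses $\norm{f}_{L^1_\gamma}\leq M_0+2E_0$ --- which is precisely why the second summand $B^{\abs\beta-2}[(\abs\beta-2)!]^\sigma$ appears in $\com{G_\sigma(f(t))}_{\abs\beta-2}$. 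Since you invoke Lemma \ref{a} as a black box this does not break your argument, but the stated mechanism would not survive being written out.
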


By the assumption in Theorem \ref{main result},  the solution
$f(t,v)$ of the Cauchy problem \reff{Landau'} is  smooth  in $v$,
and so are the coefficients $\bar a_{ij}=a_{ij}*f$, $\bar
b_{i}=b_{i}*f$ and $\bar c=c*f.$ Here and in what follows, we write
$C$ for a constant, depending only on the Gevrey index $\sigma,$ and
$M_0$, $E_0$ and $H_0$ (the initial mass, energy and entropy), which
may be different in different contexts.

The proof of Lemma \ref{crucial} can be deduced by the following
lemmas:

\begin{lemma}\label{080615}(uniformly ellipticity)
 There exists a constant $K$, depending only on $\gamma$ and $M_0$, $E_0, H_0$, such
 that
 \begin{equation}\label{ellipticity}
  \sum_{i,j=1}^3\bar a_{ij}(t,v)\xi_i\xi_j\geq K(1+\abs
  v^2)^{\gamma/2}\abs\xi^2, \quad \forall~\xi\in\Real^3, \mbox{  and  }\gamma\in[0,1].
 \end{equation}
\end{lemma}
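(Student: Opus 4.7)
The plan is to exploit the algebraic structure of the matrix $(a_{ij})$ together with a non-degeneracy property of $f(t,\cdot)$ derived from the uniform bounds on its mass, energy and entropy.

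First I would unpack the explicit form of $a_{ij}$: since $(\delta_{ij}-w_iw_j/|w|^2)$ is the orthogonal projector onto the hyperplane $w^\perp$, for any $\xi\in\Real^3$,
\begin{align*}
\sum_{i,j=1}^3 a_{ij}(w)\xi_i\xi_j = |w|^{\gamma}\bigl(|w|^2|\xi|^2-(w\cdot\xi)^2\bigr).
\end{align*}
Convolving with $f$ and factoring out $|\xi|^2$ reduces \reff{ellipticity} to the scalar lower bound
\begin{align*}
\Lambda(t,v,\hat\xi):=\int_{\Real^3}|v-v_*|^{\gamma}\bigl(|v-v_*|^2-((v-v_*)\cdot\hat\xi)^2\bigr)f(t,v_*)\,dv_*\geq K(1+|v|^2)^{\gamma/2},
\end{align*}
required uniformly in unit vectors $\hat\xi=\xi/|\xi|$.

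The analytical heart of the proof is a uniform non-concentration property of $f(t,\cdot)$: there exist $R_0,K_0>0$, depending only on $M_0,E_0,H_0$, such that for every $t\geq0$ and every $\hat\xi\in S^2$,
\begin{align*}
\int_{|v_*|\leq R_0}\bigl(|v_*|^2-(v_*\cdot\hat\xi)^2\bigr)f(t,v_*)\,dv_*\geq K_0.
\end{align*}
The derivation of this fact combines Chebyshev's inequality, which uses $\int f|v_*|^2\leq 2E_0$ to localize most of the mass inside a sufficiently large ball, with a Jensen-type argument exploiting the entropy bound $H(f(t))\leq H_0$ to forbid concentration of the localized mass on any two-dimensional subspace. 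Uniformity in $t$ follows from the conservation laws and the entropy dissipation inequality.

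Finally I would distinguish two regimes according to the size of $|v|$. For $|v|\leq 2R_0$, the weight $(1+|v|^2)^{\gamma/2}$ is a bounded constant, and restricting the integration in $\Lambda$ to the fixed ball $\{|v_*|\leq R_0\}$ combined with the non-concentration property yields the lower bound. For $|v|>2R_0$, I would split further based on the angle between $\hat\xi$ and $\hat v=v/|v|$: when this angle has sine bounded away from zero, the affine line through $v$ in direction $\hat\xi$ remains at distance of order $|v|$ from $B_{R_0}$, so the bracketed factor is of order $|v|^2$ there and one even obtains a lower bound of order $|v|^{\gamma+2}$; when $\hat\xi$ is nearly aligned with $\pm\hat v$, the integrand reduces, modulo lower order terms, to the non-concentration integral in direction $\hat v$, multiplied by $|v-v_*|^\gamma\geq c(1+|v|^2)^{\gamma/2}$, valid on $B_{R_0}$ in this regime. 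The principal obstacle is establishing the uniform non-concentration lemma above; once it is in hand, the two case analyses conclude by elementary geometric estimates.
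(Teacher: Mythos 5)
The paper does not actually prove this lemma: its ``proof'' is the single line ``See Proposition 4 of \cite{DV}'', and your sketch is essentially a reconstruction of that cited argument --- rewrite the quadratic form via the projector identity, localize the mass in a fixed ball by Chebyshev and the energy bound, and use the entropy bound to rule out concentration near the degenerate set. So the strategy is the right one, but two points need tightening before it closes. First, the degenerate set of $w\mapsto\abs w^2-\inner{w\cdot\hat\xi}^2$ is the \emph{line} $\Real\hat\xi$ (one-dimensional), not a two-dimensional subspace; more importantly, the quantity appearing in $\Lambda(t,v,\hat\xi)$ is the squared distance from $\vstar$ to the \emph{affine} line $L$ through $v$ in direction $\hat\xi$, whereas your non-concentration lemma only controls lines through the origin. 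As stated it therefore does not yield the case $0<\abs v\leq 2R_0$. You should state it uniformly over all affine lines $L$: $\int_{\abs{\vstar}\leq R_0}\mathrm{dist}(\vstar,L)^2f(t,\vstar)\,d\vstar\geq K_0$. The same entropy argument proves this, since the Lebesgue measure of $\set{\abs{\vstar}\leq R_0,\ \mathrm{dist}(\vstar,L)\leq r}$ is $O(R_0r^2)$ uniformly in $L$ and the bound $H(f(t))\leq H_0$ (together with $M_0$, $E_0$) makes the $f$-mass of any set of small measure small; once you have this affine version, your angular case analysis for $\abs v>2R_0$ is subsumed, because there $\abs{v-\vstar}\geq\abs v/2\,$ on $B_{R_0}$ gives the factor $(1+\abs v^2)^{\gamma/2}$ directly. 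Second, in the regime $\abs v\leq 2R_0$ with $\gamma>0$ the factor $\abs{v-\vstar}^\gamma$ has no positive lower bound on $B_{R_0}$; fix this by noting $\abs{v-\vstar}\geq\mathrm{dist}(\vstar,L)$ (since $v\in L$), so the integrand dominates $\mathrm{dist}(\vstar,L)^{\gamma+2}$, or restrict the integration to $\mathrm{dist}(\vstar,L)\geq r$ from the outset as in \cite{DV}. With these repairs the argument is complete and matches the cited proof.
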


\begin{proof} See Proposition 4 of  \cite{DV} \end{proof}

\begin{remark}
Although the ellipticity of $(\bar a_{ij})$ was proved in \cite{DV}
in the hard potential case $\gamma\in(0,1]$, it still holds for the
Maxwellian case $\gamma=0$. This can be seen in the proof of
Proposition 4 of \cite{DV}.
\end{remark}

\begin{lemma}\label{a}

There exists a constant $B$, depending only on the Gevrey index
$\sigma>1,$ such that for all multi-indices $\beta$ with
$|\beta|\geq 2$ and all $g$, $h\in L^2_\gamma(\Real^3),$ one has
\begin{align*}
&\sum_{i,j=1}^3\int_{\Real^3}(\p^{\beta}\bar a_{ij}(t,v))g(v)h(v)dv
\leq
 C\norm{g}_{L^2_{\gamma}}\norm{h}_{L^2_{\gamma}}\com{G_\sigma(f(t))}_{\abs \beta-2},
 \quad \mbox{for all }\:t>0,
\end{align*}
where $\com{G_\sigma(f(t))}_{\abs \beta-2}=\set{
 \norm{D^{|\beta|-2}f(t)}_{L^2}+B^{\abs{\beta}-2}\com{(|\beta|-2)!}^\sigma}$.
\end{lemma}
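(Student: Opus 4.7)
\textbf{Proof proposal for Lemma \ref{a}.}
The plan is to exploit that $\bar a_{ij} = a_{ij}\ast f$ is a convolution, so derivatives may be freely transferred between the two factors by integration by parts. For any splitting $\beta = \beta_1 + \beta_2$,
\[
 \p^\beta\bar a_{ij}(t,v) = \int_{\Real^3}(\p^{\beta_1}a_{ij})(v-\vstar)\,(\p^{\beta_2}f)(t,\vstar)\,d\vstar.
\]
I would choose $\abs{\beta_1}=2$, the smallest order that still keeps the kernel harmless: since $a_{ij}$ is homogeneous of degree $\gamma+2$ with $\gamma\in[0,1]$, $\p^{\beta_1}a_{ij}$ is homogeneous of degree $\gamma\geq 0$, so $\abs{\p^{\beta_1}a_{ij}(v)}\leq C\abs v^\gamma$ pointwise without any singularity at the origin. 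The remaining $\abs\beta-2$ derivatives are absorbed by $f$ and play the role of $D^{\abs\beta-2}f$ in the statement.

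Next I would apply the subadditive inequality $\abs{v-\vstar}^\gamma\leq \langle v\rangle^\gamma + \langle \vstar\rangle^\gamma$, valid for $\gamma\in[0,1]$, together with Fubini, to split the double integral
\[
 \sum_{i,j}\int(\p^\beta\bar a_{ij})gh\,dv = \sum_{i,j}\int\!\!\int(\p^{\beta_1}a_{ij})(v-\vstar)(D^{\abs\beta-2}f)(\vstar)g(v)h(v)\,d\vstar\,dv
\]
into two pieces. In the first, the $\langle v\rangle^\gamma$ factor stays with $g,h$, Cauchy--Schwarz in $v$ yields $\norm{g}_{L^2_\gamma}\norm{h}_{L^2_\gamma}$, and the $\vstar$ integral leaves $\norm{D^{\abs\beta-2}f}_{L^1}$; in the second, the $\langle \vstar\rangle^\gamma$ factor stays with $f$, leaving $\norm{g}_{L^2}\norm{h}_{L^2}\leq \norm{g}_{L^2_\gamma}\norm{h}_{L^2_\gamma}$ (since $\gamma\geq 0$) and the $\vstar$ integral leaves $\norm{D^{\abs\beta-2}f}_{L^1_\gamma}$. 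Both contributions are dominated by $C\norm{g}_{L^2_\gamma}\norm{h}_{L^2_\gamma}\,\norm{D^{\abs\beta-2}f(t)}_{L^1_\gamma}$, reducing the lemma to the $L^1_\gamma$ estimate $\norm{D^{\abs\beta-2}f(t)}_{L^1_\gamma}\leq C\com{G_\sigma(f(t))}_{\abs\beta-2}$.

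The main obstacle is this last step: bounding weighted $L^1$ norms of arbitrarily high derivatives of $f$ by the unweighted $L^2$ norm plus a Gevrey-type constant, uniformly in the order. For $\abs\beta=2$ it is a moment estimate on $f$ itself, controlled by the conserved mass and energy (since $\gamma\leq 1$). For $\abs\beta>2$, I would introduce an auxiliary exponent $N>3/2$ so that $\langle v\rangle^{-N}\in L^2(\Real^3)$, apply Cauchy--Schwarz to obtain $\norm{D^{\abs\beta-2}f}_{L^1_\gamma}\leq C\norm{\langle v\rangle^{\gamma+N}D^{\abs\beta-2}f}_{L^2}$, and then transfer the polynomial weight across the derivatives via the Leibniz formula. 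The resulting sum has its top-order piece absorbed by the $\norm{D^{\abs\beta-2}f}_{L^2}$ term of $\com{G_\sigma(f(t))}_{\abs\beta-2}$; the lower-order Leibniz remainders, together with their binomial factorials, are absorbed by the Gevrey term $B^{\abs\beta-2}\com{(\abs\beta-2)!}^\sigma$, with $B$ chosen large enough depending on $\sigma$. It is precisely the hypothesis $\sigma>1$ (rather than $\sigma=1$) that leaves enough room in the Gevrey factor to swallow these factorials uniformly in $\abs\beta$.
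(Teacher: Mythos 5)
Your reduction has a genuine gap at its final step. By transferring only two derivatives to the unlocalized kernel $a_{ij}$ and leaving $\abs\beta-2$ derivatives on $f$ everywhere, you arrive at the quantity $\norm{D^{\abs\beta-2}f(t)}_{L^1_\gamma}$, a weighted $L^1$ norm of a high-order derivative of $f$. But the right-hand side of the lemma offers only the unweighted $L^2$ norm $\norm{D^{\abs\beta-2}f(t)}_{L^2}$ plus the fixed number $B^{\abs\beta-2}\com{(\abs\beta-2)!}^\sigma$, and neither controls a weighted $L^1$ norm of $D^{\abs\beta-2}f$. Your proposed remedy --- Cauchy--Schwarz against $\langle v\rangle^{-N}$ followed by ``transferring the polynomial weight across the derivatives via Leibniz'' --- does not close this: Leibniz rewrites $\langle v\rangle^{\gamma+N}\p^\mu f$ in terms of derivatives of $\langle v\rangle^{\gamma+N}f$, which are still weighted derivatives of $f$ and equally uncontrolled. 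Nor can the Gevrey term absorb them: at this stage no Gevrey bound on $f$ is available (that is precisely what Proposition \ref{prp} later proves by induction, \emph{using} this lemma), so the constant $B^{\abs\beta-2}\com{(\abs\beta-2)!}^\sigma$ cannot swallow anything depending on derivatives of $f$. Your account of why $\sigma>1$ is needed is also not the actual reason.

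The paper's proof avoids the problem with a cutoff $\psi\in G_0^\sigma(\Real^3)$ supported in $\set{\abs v\leq 2}$ (the hypothesis $\sigma>1$ is used exactly so that a compactly supported Gevrey cutoff exists). Near the diagonal, two derivatives go onto the compactly supported kernel $\psi a_{ij}$, so the $\vstar$-integration runs over $\set{\abs{\vstar-v}\leq2}$ and Cauchy--Schwarz on that bounded set yields the unweighted $\norm{\p^{\beta-\nu}f}_{L^2}$; no $L^1$ norm of a high derivative of $f$ ever appears. Away from the diagonal, \emph{all} $\abs\beta$ derivatives are put on $(1-\psi)a_{ij}$, which is real-analytic there with factorial derivative bounds, so only $\norm{f(t)}_{L^1_\gamma}\leq M_0+2E_0$ is needed from $f$; this is where the Gevrey constant $B^{\abs\beta-2}\com{(\abs\beta-2)!}^\sigma$ and the weight $(1+\abs v^2)^{\gamma/2}$ actually come from. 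The idea you are missing is that the high derivatives must never land on $f$ in the region where the kernel is unbounded, and must land entirely on the kernel there.
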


\begin{proof} For $\sigma>1,$ there exists a function $\psi\in
G_0^\sigma(\Real^3)$ (cf. \cite{Rodino93}) with compact support in
$\set{v\in\Real^3\V \abs v\leq 2},$ satisfying that $\psi(v)=1$ on
the ball $\set{v\in\Real^3\V \abs v\leq 1}$, and that for some
constant $\tilde B>4$ depending only on $\sigma,$
\begin{equation}\label{psi}
   \sup \abs {\p^{\lambda} \psi}\leq
   \tilde B^{|\lambda|}(|\lambda|-1!)^\sigma,\quad \mbox{for all}~\lambda\in \mathbb {Z} _+^3.
\end{equation}

Write $a_{ij}=\psi a_{ij}+(1-\psi) a_{ij}.$ Then $\bar a_{ij}=(\psi
a_{ij})*f+[(1-\psi) a_{ij}]*f$. It is easy to see that
\[\p^{\beta}\bar
a_{ij}=\com{\p^{\nu}(\psi
a_{ij})}*(\p^{\beta-\nu}f)+\set{\p^{\beta}\com{(1-\psi) a_{ij}}}*f,
~~~\mbox{for  }|\nu|=2.
\]
We first treat the term $\com{\p^{\nu}(\psi
a_{ij})}*(\p^{\beta-\nu}f)$. A direct computation shows that
\begin{align*}
  \abs{\com{\p^{\nu}(\psi
  a_{ij})}*(\p^{\beta-\nu}f)(v)}&=\abs{\int_{\Real^3}
  \com{\p^{\nu}(\psi
  a_{ij})}(v-\vstar)\cdot(\p^{\beta-\nu}f)(\vstar)d\vstar}\\
  &\leq C\int_{\set{\abs{\vstar-v}\leq2}}
  \abs{(\p^{\beta-\nu}f)(\vstar)}d\vstar\\
  &\leq C\norm{\p^{\beta-\nu}f(t)}_{L^2}.
\end{align*}
Next, for the term $\set{\p^{\beta}\com{(1-\psi) a_{ij}}}*f$, one
has, by using the Leibniz's formula,
\begin{align*}
  &\abs{\biginner{\p^{\beta}\com{(1-\psi) a_{ij}}}*f(v)}\\=&
  \big|\sum_{0\leq\abs{\lambda}\leq\abs{\beta}}C_\beta^\lambda\int_{\Real^3}
  \com{\p^{\beta-\lambda}(1-\psi)}(v-\vstar)\cdot\inner{\p^{\lambda}
  a_{ij}}(v-\vstar)\cdot f(\vstar)d\vstar\big|
  \\\leq& \big|\sum_{1\leq\abs{\lambda}\leq\abs{\beta}}C_\beta^\lambda
  \int_{\set{1\leq\abs{\vstar-v}\leq2}}
  \inner{\p^{\beta-\lambda}\psi}(v-\vstar)\cdot\inner{\p^{\lambda}
  a_{ij}}(v-\vstar)\cdot f(\vstar)d\vstar\big|\\
  &+
  \big|\int_{\set{\abs{\vstar-v}\geq1}}
  \com{(1-\psi)}(v-\vstar)\cdot\inner{\p^{\beta}
  a_{ij}}(v-\vstar)\cdot f(\vstar)d\vstar\big|
  \\
  =&J_1+J_2.
\end{align*}
In view of \reff{coe}, we can find a constant $\tilde C,$ depending
only on $\gamma,$ such that
\[
\abs{\inner{\p^{\lambda} a_{ij}}(v-\vstar)}\leq \tilde
C^{\abs\lambda}(\abs\lambda)! \quad {\rm
for}~1\leq\abs{\vstar-v}\leq2.
\]
And for $\abs\beta\geq2,$
\[
\abs{\inner{\p^{\beta} a_{ij}}(v-\vstar)}\leq \tilde
C^{\abs\beta}(\abs\beta)!(1+\abs\vstar^\gamma+\abs v^\gamma) \quad
{\rm for~} 1\leq\abs{\vstar-v}.
\]
From the estimate \reff{psi} we know that $J_1+J_2$ can be estimated
by
\[
 \tilde B^{\abs\beta}(\abs\beta!)^\sigma\cdot\norm{f(t)}_{L^1}
 \sum_{1\leq\abs{\lambda}\leq\abs{\beta}}\biginner{\frac{\tilde
 C}{\tilde B}}^{\abs\lambda}+\tilde
C^{\abs\beta}(\abs\beta!)^\sigma\cdot\norm{f(t)}_{L_\gamma^1}(1+\abs
v^\gamma).
 \]
We can take $ \tilde B$ large enough such that $ \tilde B\geq
2\tilde C.$ Then we get
\begin{align*}
  &\abs{\biginner{\partial_v^{\beta}\com{(1-\psi) a_{ij}}}*f(v)}
  \leq J_1+J_2\\&\leq C\norm{f(t)}_{L_\gamma^1}
  \tilde B^{\abs\beta}(\abs\beta!)^\sigma(1+\abs v^2)^{\gamma/2}\\
  &\leq C \tilde B^{\abs\beta}(\abs\beta!)^\sigma(1+\abs v^2)^{\gamma/2}.
\end{align*}
In the last inequality we used the fact
$\norm{f(t)}_{L_\gamma^1}\leq M_0+2E_0.$ Now we choose a constant
$B$ such that $\tilde B^{\abs\beta}(\abs\beta!)^\sigma\leq
B^{\abs\beta-2}\com{(\abs\beta-2)!}^\sigma.$  It follows immediately
that
\begin{align*}
  \abs{\biginner{\partial_v^{\beta}\com{(1-\psi) a_{ij}}}*f(v)}
  \leq C B^{\abs\beta-2}\com{(\abs\beta-2)!}^\sigma(1+\abs v^2)^{\gamma/2}.
\end{align*}
Combining with the estimate on $\com{\p^{\nu}(\psi
a_{ij})}*(\p^{\beta-\nu}f)$,  we get finally
\begin{align*}
  \abs{\p^{\beta}\bar a_{ij}(v)}&\leq C
  \set{\norm{\p^{\beta-\nu}f(t)}_{L^2}
  +B^{\abs{\beta}-2}\com{(|\beta|-2)!}^\sigma\cdot(1+\abs v^2)^{\gamma/2}}\\
  &\leq C \com{G_\sigma(f(t))}_{\abs{\beta}-2}\cdot
  (1+\abs v^2)^{\gamma/2}.
\end{align*}
Thus, combining with Cauchy's inequality, the estimate above gives
the proof of Lemma \ref{a}.
\end{proof}

Similar to Lemma \ref{a}, we can prove that
\begin{lemma}\label{c}
For all multi-indices $\beta$ with $|\beta|\geq 0$ and all $g,h\in
L^2_\gamma(\Real^3),$  one has
\begin{align*}
&\int_{\Real^3}(\p^{\beta}\bar c(t,v))g(v)h(v)dv \leq
 C\norm{g}_{L^2_\gamma}\norm{h}_{L^2_\gamma}
 \cdot\com{G_\sigma(f(t))}_{\abs \beta}, \quad
\forall\: t\geq0.
\end{align*}
\end{lemma}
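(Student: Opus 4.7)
The statement closely parallels Lemma \ref{a}, and my plan is to reuse the Gevrey cut-off argument with two simplifications. Since $c(v)=-2(\gamma+3)\abs v^\gamma$ is already locally integrable for $\gamma\in[0,1]$, I do not need to peel off two derivatives onto $f$ as was necessary for $a_{ij}$; this is precisely why the final bound should involve $\com{G_\sigma(f(t))}_{\abs\beta}$ rather than $\com{G_\sigma(f(t))}_{\abs\beta-2}$, and why the full range $\abs\beta\geq 0$ is admissible.

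First I would split $c=\psi c+(1-\psi)c$ with the same cut-off $\psi$ as in the proof of Lemma \ref{a}, and write
\[
\p^\beta\bar c=(\psi c)*\p^\beta f+\set{\p^\beta\com{(1-\psi) c}}*f.
\]
For the first, compactly supported piece, $\psi c$ is bounded with support in $\set{\abs v\leq 2}$, so Young's inequality (or a direct Cauchy--Schwarz on the truncated convolution domain) yields
\[
\abs{(\psi c)*\p^\beta f(v)}\leq C\norm{\p^\beta f(t)}_{L^2},
\]
which produces the $\norm{\p^\beta f(t)}_{L^2}$ summand of $\com{G_\sigma(f(t))}_{\abs\beta}$.

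For the second piece I would apply the Leibniz rule to $\p^\beta\com{(1-\psi)c}$ and split, exactly as in Lemma \ref{a}, according to whether at least one derivative hits $\psi$ (producing an integral over the annulus $\set{1\leq\abs{v-\vstar}\leq 2}$) or not (producing the residual term $(1-\psi)(\p^\beta c)*f$ supported in $\set{\abs{v-\vstar}\geq 1}$). On both supports one has $\abs{v-\vstar}\geq 1$, so pointwise estimates on derivatives of $\abs v^\gamma$ give $\abs{\p^\lambda c(v-\vstar)}\leq \tilde C^{\abs\lambda}\abs\lambda!\bigl(1+\abs v^\gamma+\abs{\vstar}^\gamma\bigr)$; combined with \reff{psi} and the choice $\tilde B\geq 2\tilde C$, the Leibniz sum is absorbed geometrically and $\norm{f(t)}_{L^1_\gamma}\leq M_0+2E_0$ handles the $(1+\abs v^\gamma+\abs{\vstar}^\gamma)$ weight, yielding
\[
\abs{\set{\p^\beta\com{(1-\psi)c}}*f(v)}\leq CB^{\abs\beta}\bigl(\abs\beta!\bigr)^\sigma(1+\abs v^2)^{\gamma/2},
\]
which contributes the $B^{\abs\beta}(\abs\beta!)^\sigma$ summand.

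Combining the two estimates gives $\abs{\p^\beta\bar c(v)}\leq C\com{G_\sigma(f(t))}_{\abs\beta}\cdot(1+\abs v^2)^{\gamma/2}$, after which Cauchy--Schwarz against the $L^2_\gamma$-weights on $g$ and $h$ closes the proof. The one place that needs explicit attention is the low-$\abs\beta$ regime: when $\abs\beta=0,1$ no derivative can be peeled off $c$ in the compact piece, so $\bar c$ and $\p\bar c=c*\p f$ must be controlled directly using $\norm{f(t)}_{L^1_\gamma}\leq M_0+2E_0$, which is immediate from conservation of mass and energy. The main bookkeeping obstacle is to choose the constant $B$ (possibly larger than the one produced in Lemma \ref{a}) so that the single envelope $\com{G_\sigma(f(t))}_{\abs\beta}$ uniformly dominates all $\abs\beta\geq 0$ at once; otherwise the argument is entirely parallel to Lemma \ref{a}.
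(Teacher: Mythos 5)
Your proposal is correct and follows exactly the route the paper intends: the paper gives no written proof of Lemma \ref{c} beyond the remark that it is ``similar to Lemma \ref{a}'', and your adaptation --- splitting $c=\psi c+(1-\psi)c$, putting all $\beta$ derivatives on $f$ in the compactly supported piece (which is why no derivatives need to be peeled off $c$ and the index $\abs\beta$ rather than $\abs\beta-2$ appears), and running the Leibniz/annulus argument with \reff{psi} and $\norm{f(t)}_{L^1_\gamma}\leq M_0+2E_0$ on the far piece --- is precisely that argument, correctly carried out.
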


Let us now present the proof of the main result of this section.
\begin{proof}[Proof of Lemma \ref{crucial}]
Since
\[\sum_{i=1}^3\partial_{v_i}\bar a_{ij}=\bar b_j,\quad \sum_{j=1}^3\partial_{v_j}b_j=\bar c,\]
and $f$ satisfies $\partial_t f=\sum_{i,j=1}^3\bar
a_{ij}\partial_{v_iv_j}f-\bar c f$, then it holds that
\begin{align*}
 \partial_t\norm{\p^\mu f(t)}_{L^2}^2&=2\int_{\Real^3}\com{\partial_t
 \p^\mu f(t,v)}\cdot\com{\p^\mu  f(t,v)}dv\\&=2\int_{\Real^3}\com{\sum_{i,j=1}^3
 \p^\mu  \biginner{ \bar a_{ij}\partial_{v_iv_j}f-\bar c f}}\cdot\com{\p^\mu  f(t,v)}
 dv.
\end{align*}
Moreover, by using Leibniz's formula, we have
\begin{eqnarray*}
  \partial_t\norm{\p^\mu f(t)}_{L^2}^2&=&2\sum_{i,j=1}^3\int_{\Real^3}
  \bar a_{ij}\inner{\partial_{v_iv_j}\p^\mu  f}\cdot\inner{\p^\mu  f } dv\\
  &&+2\sum_{i,j=1}^3~\sum_{\abs\beta=1}
  C_{\mu }^\beta\int_{\Real^3}\inner{\p^\beta\bar a_{ij}}\inner{\partial_{v_iv_j}\p^{\mu -\beta} f}
  \cdot\inner{\p^\mu f } dv\\
  &&+2\sum_{i,j=1}^3~\sum_{2\leq\abs\beta\leq\abs\mu}
  C_{\mu }^\beta\int_{\Real^3}\inner{\p^\beta\bar a_{ij}}\inner{\partial_{v_iv_j}\p^{\mu -\beta} f}
  \cdot\inner{\p^\mu f } dv\\
  &&-2\sum_{0\leq\abs\beta\leq\abs\mu }
  C_{\mu }^\beta\int_{\Real^3}\inner{\p^{\mu-\beta}\bar c}\inner{
  \p^{\beta} f}\cdot\inner{\p^\mu  f }dv\\
  &=&(I)+(II)+(III)+(IV).
\end{eqnarray*}
Thus the proof of Lemma \ref{crucial} depends on the following
estimates.

\smallskip
{\bf Step 1. Estimate on the term $(I).$}

\smallskip
Integrating by parts, one has
\begin{eqnarray*}
  (I)&=&-2\sum_{i,j=1}^3\int_{\Real^3}
  \bar a_{ij}\inner{\partial_{v_j}\p^\mu f}\cdot\inner{\partial_{v_i}\p^\mu f } dv\\
  &&-2\sum_{j=1}^3\int_{\Real^3}
  \bar b_{j}\inner{\partial_{v_j}\p^\mu f}\cdot\inner{\p^\mu f } dv\\
  &=&(I)_1+(I)_2.
\end{eqnarray*}
For the term $(I)_1$, one has, by applying the ellipticity property
\reff{ellipticity},
\begin{equation*}
  (I)_1\leq -2K\int_{\Real^3}
  \abs{\nabla_v\p^\mu f}^2(1+\abs v^2)^{\gamma/2}dv
  = -2K\norm{\nabla_v\p^\mu f}_{L^2_\gamma}^2.
\end{equation*}
For the term $(I)_2,$ integrating by parts again, we have
\begin{align*}
  (I)_2=-(I)_2+2\int_{\Real^3}
  \bar c\inner{\p^\mu f}\cdot\inner{\p^\mu f } dv.
\end{align*}

Observing that $
  \abs{\bar c(v)}\leq C\norm{f(t)}_{L^1_\gamma}
  (1+\abs v^2)^{\gamma/2}\leq C
  (1+\abs v^2)^{\gamma/2}
$, one has
\[
  (I)_2\leq C\norm{\p^\mu f}_{L^2_\gamma}^2
  \leq C\norm{\nabla_vD^{|\mu|-1} f}_{L^2_\gamma}^2.
\]
This implies
\begin{equation}\label{I}
  (I)\leq -2K\norm{\nabla_v\p^\mu f}_{L^2_{\gamma}}^2+C\norm{\nabla_vD^{\abs\mu-1} f}_{L^2_{\gamma}}^2.
\end{equation}

\smallskip {\bf Step 2. Upper bound for the term $(II).$}

\smallskip
Recall that $(II)=2\sum_{i,j=1}^3\sum_{\abs\beta=1} C_{\mu
}^\beta\int_{\Real^3}\inner{\p^\beta\bar
a_{ij}}\inner{\partial_{v_iv_j}\p^{\mu -\beta} f} \cdot\inner{\p^\mu
f }dv.$ Integrating by parts, we have
\begin{align*}
 (II)&=-2\sum_{j=1}^3\sum_{\abs\beta=1}
  C_{\mu}^\beta\int_{\Real^3}\inner{\p^\beta\bar b_{j}}\inner{\partial_{v_j}\p^{\mu-\beta} f}
  \cdot\inner{\p^\mu f } dv\\
  &-2\sum_{i,j=1}^3~\sum_{\abs\beta=1}
  C_{\mu}^\beta\int_{\Real^3}\inner{\p^\beta\bar a_{ij}}\inner{\partial_{v_j}\p^{\mu-\beta} f}
  \cdot\inner{\partial_{v_i}\p^\mu f } dv\\
  &=(II)_1+(II)_2.
\end{align*}
Observing that $\abs{\p^\beta\bar b_{j}(t,v)}\leq C(1+\abs
v^2)^{\gamma/2}$ for $\abs\beta=1,$ one has
\begin{align*}
 (II)_1\leq C\abs\mu\cdot\norm{\nabla_v\p^{\mu-\beta}f(t)}_{L^2_\gamma}
 \norm{\p^{\mu}f(t)}_{L^2_\gamma}
 \leq
 C\abs\mu\cdot\norm{\nabla_vD^{\abs\mu-1}f(t)}_{L^2_\gamma}^2.
\end{align*}

For the term $(II)_2$, if we write $\mu=\beta+(\mu-\beta),$ then it
holds that
\begin{align*}
 (II)_2&=-2\sum_{i,j=1}^3~\sum_{\abs\beta=1}
  C_{\mu}^\beta\int_{\Real^3}\inner{\p^\beta\bar a_{ij}}\inner{\partial_{v_j}\p^{\mu-\beta} f}
  \cdot\inner{\p^\beta\partial_{v_i}\p^{\mu-\beta} f } dv.
\end{align*}
Since $\abs \beta=1,$ we can integrate by parts to get
\begin{align*}
  (II)_2=&2\sum_{i,j=1}^3~\sum_{\abs\beta=1}
  C_{\mu}^\beta\int_{\Real^3}\inner{\p^{\beta}\bar a_{ij}}\inner{\partial_{v_j}\p^{\mu} f}
  \cdot\inner{\partial_{v_i}\p^{\mu-\beta} f } dv\\
  &+2\sum_{i,j=1}^3~\sum_{\abs\beta=1}
  C_{\mu}^\beta\int_{\Real^3}\inner{\p^{\beta+\beta}\bar a_{ij}}\inner{\partial_{v_j}\p^{\mu-\beta} f}
  \cdot\inner{\partial_{v_i}\p^{\mu-\beta} f } dv\\
  =&-(II)_2+2\sum_{i,j=1}^3~\sum_{\abs\beta=1}
  C_{\mu}^\beta\int_{\Real^3}\inner{\p^{\beta+\beta}\bar a_{ij}}\inner{\partial_{v_j}\p^{\mu-\beta} f}
  \cdot\inner{\partial_{v_i}\p^{\mu-\beta} f } dv.
\end{align*}
Hence
\begin{align*}
  (II)_2&=\sum_{i,j=1}^3~\sum_{\abs\beta=1}
  C_{\mu}^\beta\int_{\Real^3}\inner{\p^{\beta+\beta}\bar a_{ij}}\inner{\partial_{v_j}\p^{\mu-\beta} f}
  \cdot\inner{\partial_{v_i}\p^{\mu-\beta} f } dv.
\end{align*}
Observing  that $\abs{\p^{\beta+\beta}\bar a_{ij}(v)}\leq C (1+\abs
v^2)^{\gamma/2}$ for $|\beta|=1,$  one has
\begin{align*}
 (II)_2\leq C\sum_{\abs\beta=1}
  C_{\mu}^\beta\cdot\norm{\nabla_v\p^{\mu-\beta}f}_{L^2_{\gamma}}^2
  \leq C\abs\mu\cdot\norm{\nabla_vD^{\abs\mu-1}f}_{L^2_{\gamma}}^2,
\end{align*}
which means
\begin{align}\label{II}
 (II)\leq C\abs\mu\cdot\norm{\nabla_vD^{\abs\mu-1}f}_{L^2_{\gamma}}^2.
\end{align}

{\bf Step 3. Upper bound for the term $(III)$ and $(IV)$.}
\smallskip

Recall that $$(III)=2\sum_{i,j=1}^3\sum_{2\leq\abs\beta\leq\abs\mu}
  C_{\mu }^\beta\int_{\Real^3}\inner{\p^\beta\bar a_{ij}}\inner{\partial_{v_iv_j}\p^{\mu -\beta} f}
  \cdot\inner{\p^\mu f }\,dv,$$
and that $$
  (IV)=-2\sum_{0\leq\abs\beta\leq\abs\mu }
  C_{\mu }^\beta\int_{\Real^3}\inner{\p^{\mu -\beta}\bar c}\inner{
  \p^\beta f}\cdot\inner{\p^\mu  f }dv.
$$
By  Lemma \ref{a} and Lemma \ref{c},  we have
\begin{align}\label{III}
\begin{split}
  &(III)\leq C \sum_{i,j=1}^3~\sum_{2\leq\abs\beta\leq\abs\mu}
  C_{\mu }^\beta \norm{\partial_{v_iv_j}\p^{\mu -\beta}f(t)}_{L^2_\gamma}
  \cdot\norm{\p^\mu f(t)}_{L^2_\gamma}\com{G_\sigma(f(t))}_{\abs\beta-2}\\
  &\leq C \sum_{2\leq\abs\beta\leq\abs\mu}
  C_{\mu }^\beta \norm{\nabla_vD^{\abs\mu -\abs\beta+1} f(t)}_{L^2_\gamma}
  \cdot\norm{\nabla_vD^{\abs\mu-1}
  f(t)}_{L^2_\gamma}\com{G_\sigma(f(t))}_{\abs\beta-2},
\end{split}
\end{align}
and
\begin{equation}\label{IV}
  (IV)\leq C\sum_{0\leq\abs\beta\leq\abs\mu}
  C_{\mu }^\beta\norm{
  \p^\beta f(t)}_{L^2_\gamma}\cdot\norm{\nabla_vD^{\abs\mu-1}  f(t) }_{L^2_\gamma}\cdot\com{G_\sigma(f(t))}_{\abs{\mu}-\abs\beta}.
\end{equation}
Combining with the estimates \reff{I}-\reff{IV}, one has
\begin{align*}
  \partial_t&\norm{\p^\mu f(t)}_{L^2}^2+C_1\norm{\nabla_v\p^\mu  f(t) }_{L^2_\gamma}^2
  \leq C_2\abs\mu^2\norm{\nabla_vD^{\abs\mu-1} f(t)}_{L^2_\gamma}^2\\
  &+C_2 \sum_{2\leq\abs\beta\leq\abs\mu}
  C_{\mu }^\beta \norm{\nabla_vD^{\abs\mu -\abs\beta+1} f(t)}_{L^2_\gamma}
  \cdot\norm{\nabla_vD^{\abs\mu-1}
  f(t)}_{L^2_\gamma}\cdot\com{G_\sigma(f(t))}_{\abs\beta-2}\\
  &+C_2\sum_{0\leq\abs\beta\leq\abs\mu}
   C_{\mu }^\beta\norm{
  \p^\beta f(t)}_{L^2_\gamma}\cdot\norm{\nabla_vD^{\abs\mu-1}  f(t) }_{L^2_\gamma}
  \cdot\com{G_\sigma(f(t))}_{\abs{\mu}-\abs\beta}.
\end{align*}
This completes the proof of Lemma \ref{crucial}.
\end{proof}

\vskip 5ex
\section{Proof of Theorem 1.1}
\label{section3} Theorem \ref{main result} will be deduced by the
following result:

\begin{proposition}\label{prp}

Let $\sigma>1$ and $f_0\in  G^{\sigma}(\Real^3)$ be the initial
datum with finite mass, energy and entropy, and let $f$ be a smooth
solution of the Cauchy problem \reff{Landau'} satisfying
\reff{space}. Then for any fixed $T$, $0<T<+\infty,$ there exists a
constant $A$, which depends only on $M_0$, $E_0$, $H_0$, $\gamma$,
$T$, $\sigma$ and the Gevrey constant of $f_0,$ such that for any
$k\in\Natural$, $k\geq 1,$ one has
\begin{equation*}
 (Q)_k \quad \sup_{t\in[0,T]}\norm{\p^\alpha f(t)}_{L^2}
 +\set{\int_{0}^{T}\|\nabla_v \p^{\tilde\alpha}f(t)\|_{L^2_{\gamma}}^2\,dt}^{1/2}\leq
 A^{k}\com{(k-1)!}^\sigma
\end{equation*}
for all multi-indices $\alpha$ and $\tilde\alpha$ with
$\abs\alpha=\abs{\tilde\alpha}=k.$
\end{proposition}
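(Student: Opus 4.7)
The plan is to prove $(Q)_k$ by induction on $k\geq 1$. The base case $k=1$ is furnished directly by the a priori regularity \reff{space}: the $L^\infty_t L^2$ norm of $\p^\alpha f$ for $|\alpha|=1$ and the $L^2_t L^2_\gamma$ norm of $\nabla_v\p^{\tilde\alpha} f$ for $|\tilde\alpha|=1$ are both finite on $[0,T]$, so by choosing $A$ large enough (depending on these a priori bounds) the inequality $(Q)_1$ holds. For the inductive step, I fix $k\geq 2$, assume $(Q)_j$ for $1\leq j\leq k-1$, and let $\mu$ be any multi-index with $|\mu|=k$. Integrating the differential inequality of Lemma \ref{crucial} over $[0,t]$ with $t\leq T$, the left-hand side yields $\norm{\p^\mu f(t)}_{L^2}^2+C_1\int_0^t\norm{\nabla_v\p^\mu f(s)}_{L^2_\gamma}^2\,ds$, up to the initial contribution $\norm{\p^\mu f_0}_{L^2}^2$, which is at most $C_0^{2k+2}(k!)^{2\sigma}$ by the hypothesis $f_0\in G^\sigma(\Real^3)$.

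Next, I would estimate each term on the right of Lemma \ref{crucial} using the inductive hypothesis. The first contribution $C_2|\mu|^2\int_0^t\norm{\nabla_v D^{|\mu|-1}f}_{L^2_\gamma}^2\,ds$ involves a gradient applied to an order $k-1$ derivative, so the $L^2_t L^2_\gamma$ part of $(Q)_{k-1}$ controls it by $C k^2 A^{2(k-1)}[(k-2)!]^{2\sigma}$. For the double sum in $2\leq|\beta|\leq|\mu|$, I would apply the Cauchy--Schwarz inequality in $t$: the factor $\norm{\nabla_v D^{|\mu|-1}f}_{L^2_\gamma}$ is controlled in $L^2_t$ by $(Q)_{k-1}$, and for each $\beta$ the companion factor $\norm{\nabla_v D^{|\mu|-|\beta|+1}f}_{L^2_\gamma}$ is controlled in $L^2_t$ by $(Q)_{k-|\beta|+1}$ (or by \reff{space} when $|\beta|=k$, which reduces to an order-$2$ gradient). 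The Gevrey bracket $\com{G_\sigma(f(t))}_{|\beta|-2}$ is bounded pointwise in $t$ either by $(Q)_{|\beta|-2}$ (when $|\beta|\geq 3$) or, via its explicit piece, by $B^{|\beta|-2}\com{(|\beta|-2)!}^\sigma$; the case $|\beta|=2$ is absorbed by the mass/energy bound on $\norm{f(t)}_{L^2}$. The third sum over $0\leq|\beta|\leq|\mu|$ is treated analogously, combining $(Q)_{|\beta|}$ in $L^\infty_t$ for $\norm{\p^\beta f}_{L^2_\gamma}$, the same $L^2_t$ bound from $(Q)_{k-1}$ for the top-order gradient factor, and the twofold decomposition of $\com{G_\sigma(f(t))}_{k-|\beta|}$.

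After these substitutions, the right-hand side reduces, up to harmless constants, to sums of schematic form
\begin{align*}
A^{2k}\com{(k-1)!}^{2\sigma}\sum_{\beta}\binom{k}{|\beta|}\frac{\com{(|\beta|-a)!}^\sigma\com{(k-|\beta|-b)!}^\sigma}{\com{(k-1)!}^\sigma},\qquad a,b\in\{0,1,2\},
\end{align*}
together with a lower-power contribution from the initial data and the first term. The standard Gevrey convolution identity $\binom{k}{j}\,j!\,(k-j)!=k!$ rearranges the combinatorial factor to $\com{(k-1)!}^\sigma$ multiplied by a tail of the type $\sum_j 1/j^3$ or $\sum_j 1/(j^2(k-j))$, each bounded by $C_\sigma k^{\sigma-1}$ via Lemma \ref{0806152}. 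Choosing $A$ sufficiently large in terms of $M_0,E_0,H_0,\gamma,T,\sigma$, the Gevrey constant of $f_0$, the constants $B,C_1,C_2,K$ and $C_\sigma$ above then forces the right-hand side to be at most $\tfrac12 A^{2k}\com{(k-1)!}^{2\sigma}$, which closes $(Q)_k$ for both the $L^\infty_t L^2$ and $L^2_t L^2_\gamma$ pieces simultaneously.

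The main obstacle is this combinatorial closure: one must carefully track the shifted factorials $(|\beta|-2)!$, $(k-|\beta|+1-1)!$ and $(|\beta|-1)!$ appearing across the Gevrey brackets, inductive norms and binomial coefficients, and verify that after the convolution rearrangement the residual $\beta$-sum is precisely of the type controlled by \reff{sum1} or \reff{sum2}. A secondary technical point is the boundary cases $|\beta|\in\{0,1,2,k\}$: when the induction index drops to $0$ or $1$ one must substitute the a priori bounds from \reff{space} and the mass/energy conservation rather than $(Q)_j$, and care is needed that the growth $|\mu|^2$ multiplying $\norm{\nabla_v D^{|\mu|-1}f}_{L^2_\gamma}^2$ is dominated by $A^{2k}\com{(k-1)!}^{2\sigma}/(A^{2(k-1)}\com{(k-2)!}^{2\sigma}) \sim A^2(k-1)^{2\sigma}$, which is the main constraint forcing $A$ large.
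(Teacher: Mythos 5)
Your proposal follows essentially the same route as the paper: induction on $k$ with the base case absorbed into the choice of $A$, integration of the energy inequality of Lemma \ref{crucial}, Cauchy--Schwarz in time combined with the inductive bounds $(Q)_j$ and the explicit part of the Gevrey bracket, the convolution rearrangement of the binomial factors, and the summation Lemma \ref{0806152} to close the estimate for $A$ large. The points you flag as requiring care (the shifted factorials, the boundary cases $|\beta|\in\{0,1,2,k\}$, and the $|\mu|^2$ prefactor) are exactly the ones the paper handles in its decomposition into the terms $(S_1)$--$(S_5)$, so the sketch is consistent with the published argument.
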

\begin{remark}
From the estimate $(Q)_k$ in Proposition \ref{prp} we can deduce
directly
 the result of Theorem \ref{main result}.
\end{remark}

\begin{proof} [Proof of Proposition \ref{prp}]

We use induction on $k$ to prove the estimate $(Q)_k$. Observe that
$(Q)_1$ holds if we take  $A$  large enough such that
\begin{equation}\label{A}
 A\geq\sup_{t\in[0,T]}\norm {f(t)}_{H^1_\gamma}+ \norm f_{L^2([0, T];
 H^2_\gamma)}+2.
\end{equation}

Assume that the estimate $(Q)_l$ holds for $1\leq l\leq k-1$ and
$k\geq 2$. Then we need to prove that the estimate $(Q)_k$ is true.
Firstly we prove that
\begin{equation}\label{first term}
\sup_{t\in[0,T]}\norm{\p^\alpha f(t)}_{L^2}\leq {1\over
2}A^{\abs\alpha}\com{(\abs\alpha-1)!}^\sigma,\quad \mbox{for all
 }~\abs\alpha=k.
\end{equation}
Applying Lemma \ref{crucial} with $\mu=\alpha,$ we obtain
\begin{align}\label{inter}
\begin{split}
  &\partial_t\norm{\p^\alpha f(t)}_{L^2}^2+C_1\norm{\nabla_v\p^\alpha  f }_{L^2_\gamma}^2
  \leq C_2\abs\alpha^2\norm{\nabla_vD^{\abs\alpha-1} f}_{L^2_\gamma}^2\\
  &+C_2 \sum_{2\leq\abs\beta\leq\abs\alpha}
  C_{\alpha }^\beta \norm{\nabla_vD^{\abs\alpha -\abs\beta+1} f}_{L^2_\gamma}
  \cdot\norm{\nabla_vD^{\abs\alpha-1}
  f}_{L^2_\gamma}\cdot\com{G_\sigma(f(t))}_{\abs\beta-2}\\
  &+C_2\sum_{0\leq\abs\beta\leq\abs\alpha}
   C_{\alpha }^\beta\norm{
  \p^\beta f}_{L^2_\gamma}\cdot\norm{\nabla_vD^{\abs\alpha-1} f}_{L^2_\gamma}
  \cdot\com{G_\sigma(f(t))}_{\abs{\alpha}-\abs\beta}.
\end{split}
\end{align}
Next for the last term on the right hand side of the above
inequality, one has
\begin{align*}
   C&_2\sum_{0\leq\abs\beta\leq\abs\alpha}
   C_{\alpha }^\beta\norm{
  \p^\beta f(t)}_{L^2_\gamma}\cdot\norm{\nabla_v\p^{\abs\alpha-1} f(t)}_{L^2_\gamma}
  \cdot\com{G_\sigma(f(t))}_{\abs{\alpha}-\abs\beta}\\
  =&C_2
   \norm{ f(t)}_{L^2_\gamma}\cdot\norm{\nabla_v\p^{\abs\alpha-1} f(t)}_{L^2_\gamma}
  \cdot\com{G_\sigma(f(t))}_{\abs{\alpha}}\\
  &+C_2\sum_{\abs\beta=1}
   C_{\alpha }^\beta\norm{
  \p^\beta f(t)}_{L^2_\gamma}\cdot\norm{\nabla_v\p^{\abs\alpha-1} f(t)}_{L^2_\gamma}
  \cdot\com{G_\sigma(f(t))}_{\abs{\alpha}-1}\\
  &+C_2\sum_{2\leq\abs\beta\leq\abs\alpha}
   C_{\alpha }^\beta\norm{
  \p^\beta f(t)}_{L^2_\gamma}\cdot\norm{\nabla_v\p^{\abs\alpha-1} f(t)}_{L^2_\gamma}
  \cdot\com{G_\sigma(f(t))}_{\abs{\alpha}-\abs\beta}.
\end{align*}

We denote $\com{G_\sigma(f)}_{\abs\beta}=\sup_{t\in[0,
T]}\com{G_\sigma(f(t))}_{\abs\beta}$. Integrating in both sides of
the estimate \reff{inter} over the interval $[0, T]$, and using the
Cauchy inequality, we get
\begin{align*}
  &\norm{\p^\alpha f(t)}_{L^2}^2-\norm{\p^\alpha
  f(0)}_{L^2}^2
  \leq C_2\abs\alpha^2\int_{0}^{T}\norm{\nabla_v D^{\abs\alpha-1} f(s)}_{L^2_\gamma}^2ds\\
  &+C_2\sum_{2\leq\abs\beta\leq\abs\alpha}
  C_{\alpha}^\beta
  \com{G_\sigma(f)}_{\abs\beta-2}
  \set{\int_{0}^{T}\norm{\nabla_v
  D^{\abs\alpha-\abs\beta+1}f(s)}_{L^2_\gamma}^2ds}^{{1\over2}}\\
  &\qquad\times
  \set{\int_{0}^{T}\norm{\nabla_v D^{\abs\alpha-1}f(s)}_{L^2_\gamma}^2ds}^{{1\over2}}\\
  &+C_2
  \max_{0\leq t\leq T}\norm{f(t)}_{L^2_\gamma}\cdot
  \set{\int_{0}^{T}\com{G(f(s))}_{\abs\alpha}ds}^{{1\over2}}
  \set{\int_{0}^{T}\norm{\nabla_v D^{\abs\alpha-1}f(s)}_{L^2_\gamma}^2ds}^{{1\over2}}\\
  &+C_2\sum_{\abs\beta=1}
  C_{\alpha}^\beta\max_{0\leq t\leq T}\norm{f(t)}_{H^1_\gamma}\cdot
  \com{G_\sigma(f)}_{\abs\alpha-1}
  \cdot\int_{0}^{T}\norm{\nabla_v D^{\abs\alpha-1}f(s)}_{L^2_\gamma}ds\\
  &+C_2\sum_{2\leq\abs\beta\leq\abs\alpha}
  C_{\alpha}^\beta
  \com{G_\sigma(f)}_{\abs\alpha-\abs\beta}\set{\int_{0}^{T}\norm{\p^{\beta}f(s)}_{L^2_\gamma}^2ds}^{{1\over2}}\\
  &\qquad\times
  \set{\int_{0}^{T}\norm{\nabla_v D^{\abs\alpha-1}f(s)}_{L^2_\gamma}^2ds}^{{1\over2}}\\
  &\stackrel{{\rm def}}{=}(S_1)+(S_2)+(S_3)+(S_4)+(S_5).
\end{align*}
From the induction assumption and the fact $\norm {\p^\beta
f}_{L^2_\gamma}\leq \norm {\nabla_v\p^{\abs\beta-1} f}_{L^2_\gamma}
$ for $\abs\beta\geq1$, we have, respectively, the following
estimates:
\begin{equation}\label{ind1}
\set{\int_{0}^{T}\norm{\nabla_v\p^{\abs\alpha-1}f(s)}_{L^2_\gamma}^2ds}^{{1\over2}}
 \leq A^{\abs\alpha-1}\com{(\abs\alpha-2)!}^\sigma;
\end{equation}
\begin{equation}\label{ind2}
\set{\int_{0}^{T}\norm{\nabla\p^{\abs\alpha-\abs\beta+1}f(s)}_{L^2_\gamma}^2ds}^{{1\over2}}
\leq
A^{\abs\alpha-\abs\beta+1}\com{(\abs\alpha-\abs\beta)!}^\sigma,\:
2\leq\abs\beta\leq\abs\alpha;
\end{equation}
\begin{equation}\label{ind3}
\set{\int_{0}^{T}\norm{\p^{\beta}f(s)}_{L^2_\gamma}^2ds}^{{1\over2}}
 \leq
 A^{\abs\beta-1}\com{(\abs\beta-2)!}^\sigma,\quad 2\leq\abs\beta\leq\abs\alpha.
\end{equation}

We next treat the term $\com{G_\sigma(f)}_{m}$ given by
$$
\com{G_\sigma(f)}_{m}=\sup_{t\in[0,T]}\com{G_\sigma(f(t))}_m
=\sup_{t\in[0, T]}\norm{\p^{m}f(t)}_{L^2}+B^{m}\inner{m!}^\sigma,
$$
Observe that for some $\tilde
 C_\sigma>0$,
\[
 B^{m}\inner{m!}^\sigma\leq
 (\tilde C_\sigma B)^{m}\inner{(m-1)!}^\sigma,\quad
 \mbox{for  }1\leq\abs\beta\leq\abs\alpha-1.
\]
Also from the induction assumption, one has
\begin{align*}
\max_{t\in[0, T]}\norm{\p^{m}f(t)}_{L^2}
 \leq A^{m}\inner{(m-1)!}^\sigma,\quad 1\leq m\leq
 \abs\alpha-1=k-1.
\end{align*}
Thus for  $A$ large enough, we have
\begin{align}\label{mfv1}
 \com{G_\sigma(f)}_{m}
 \leq2 A^{m}\inner{(m-1)!}^\sigma, \quad 1\leq m\leq\abs\alpha-1=k-1.
\end{align}

By using the estimate \reff{ind1},  one has
\begin{align}\label{mfv3}
 \int_{0}^{T}\com{G(f(s))}_{\abs\alpha}ds
 \leq C A^{|\alpha|-1}\com{(|\alpha|-2)!}^\sigma.
\end{align}

The estimate for the term $(S_1)$ can be given by \reff{ind1}
directly,
 \begin{align}\label{S1}
 (S_1)&\leq
  C_2\abs\alpha^2\com{A^{\abs\alpha-1}\inner{(\abs\alpha-1)!}^\sigma}^2
   \leq \tilde C_2\com{A^{\abs\alpha-1}\inner{\abs\alpha!}^\sigma}^2 .
\end{align}

We write now the term $(S_2)$ into two parts,
\begin{align*}
  (S_2)=&C_2\sum_{\abs\beta=2}
  C_{\alpha}^\beta
  \com{G_\sigma(f)}_{0}\cdot\int_{0}^{T}\norm{\nabla D^{\abs\alpha-1}f(s)}_{L^2_\gamma}^2ds\\
  &+C_2\sum_{3\leq\abs\beta\leq\abs\alpha}
  C_{\alpha}^\beta
  \com{G_\sigma(f)}_{\abs\beta-2}\set{\int_{0}^{T}\norm{\nabla D^{\abs\alpha-\abs\beta+1}f(s)}_{L^2_\gamma}^2ds}^{1/2}\\
  &\indent\times\set{\int_{0}^{T}\norm{\nabla D^{\abs\alpha-1}f(s)}_{L^2_\gamma}^2ds}^{1/2}\\
  =&(S_2)'+(S_2)^{''}.
\end{align*}
Thus \reff{A} and \reff{ind1} give that
\begin{align*}
 (S_2)'\leq C_2 A \abs\alpha^2\set{
 A^{\abs{\alpha}-1}\com{(\abs\alpha-2)!}^\sigma}^2\leq \tilde C_2 A
 \set{A^{\abs{\alpha}-1}\com{(\abs\alpha-1)!}^\sigma}^2,
\end{align*}
 and \reff{ind1}, \reff{ind2} and \reff{mfv1} give that
\begin{align*}
 (S_2)^{''}&\leq C_2 \sum_{3\leq\abs\beta\leq\abs\alpha}
 \frac{\abs\alpha !}{\abs\beta!(\abs{\alpha}-\abs\beta)!}A^{|\beta|-2}\com{(|\beta|-3)!}^\sigma
 A^{\abs{\alpha}-|\beta|+1}\com{(\abs\alpha-|\beta|)!}^\sigma\\
 &\qquad\times A^{|\alpha|-1}\com{(|\alpha|-2)!}^\sigma.
\end{align*}
Observing  that
\begin{align*}
 \frac{\abs\alpha !}{\abs\beta!(\abs{\alpha}-\abs\beta)!}\com{(|\beta|-3)!}^\sigma
 \com{(\abs\alpha-|\beta|)!}^\sigma
 \leq\frac{6\abs\alpha}{\abs\beta^3}
 \set{(\abs\alpha-1)!}^\sigma,
\end{align*}
we have by the estimate \reff{sum1}
\begin{align*}
 (S_2)^{''}&\leq C_2(A^{\abs{\alpha}-1})^2\com{(\abs\alpha-2)!}^\sigma
 \com{(\abs\alpha-1)!}^\sigma\sum_{3\leq\abs\beta\leq\abs\alpha}\frac{6\abs\alpha}{\abs\beta^3}\\
 &\leq 6C_2(A^{\abs{\alpha}-1})^2\com{(\abs\alpha-2)!}^\sigma
 \com{(\abs\alpha-1)!}^\sigma\abs\alpha^\sigma \\
 &\leq 6C_2\set{A^{\abs{\alpha}-1}
 \com{(\abs\alpha-1)!}^\sigma}^2.
\end{align*}
Therefore
\begin{equation}\label{S2}
 (S_2)=  (S_2)'+(S_2)^{''}\leq 6C_2 A \set{A^{\abs{\alpha}-1}
 \com{(\abs\alpha-1)!}^\sigma}^2.
\end{equation}
Similarly, from the estimates \reff{A}, \reff{ind1} and \reff{mfv3},
one has
\begin{align}\label{S3}
  (S_3)&\leq
   \tilde C_2 A
  \set{A^{\abs\alpha-1}\com{(\abs\alpha-2)!}^\sigma}^2.
\end{align}
From \reff{A}, \reff{ind1} and \reff{mfv1}, one can deduce that
\begin{align}\label{S4}
  (S_4)&\leq
   \tilde C_2 A
  \set{A^{\abs\alpha-1}\com{(\abs\alpha-1)!}^\sigma}^2.
\end{align}

It remains to estimate the term $(S_5).$  From \reff{ind1},
\reff{ind3} and \reff{mfv1} we have
\begin{align*}
 (S_5)\leq &\sum_{2\leq\abs\beta\leq\abs\alpha-1}
 \frac{C_2 \abs\alpha !}{\abs\beta!(\abs{\alpha}-\abs\beta)!}
 A^{\abs{\alpha}-|\beta|}\com{(\abs\alpha-|\beta|-1)!}^\sigma
 A^{|\beta|-1}\com{(|\beta|-2)!}^\sigma\\
 &\qquad\times A^{|\alpha|-1}\com{(|\alpha|-2)!}^\sigma\\
  &+C_2A\set{A^{\abs\alpha-1}\com{(\abs\alpha-2)!}^\sigma}^2.
\end{align*}
Since for $ 2\leq\abs\beta\leq\abs\alpha-1$,
\begin{align*}
 \frac{\abs\alpha !}{\abs\beta!(\abs{\alpha}-\abs\beta)!}
 \com{(\abs\alpha-|\beta|-1)!}^\sigma\com{(|\beta|-2)!}^\sigma
 \leq\frac{2\abs\alpha}{\abs\beta^2(\abs\alpha-\abs\beta)}
 \com{(\abs\alpha-1)!}^\sigma,
\end{align*}
then by using the estimate \reff{sum2}, we have
\begin{align*}
 (S_5)&\leq C_2\inner{A^{|\alpha|-1}}^2\com{(|\alpha|-1)!}^{\sigma}
 \com{(|\alpha|-2)!}^{\sigma}\sum_{2\leq\abs\beta\leq\abs\alpha-1}
 \frac{2\abs\alpha}{\abs\beta^2(\abs\alpha-\abs\beta)}\\
 &\indent+C_2A\set{A^{\abs\alpha-1}\com{(\abs\alpha-2)!}^\sigma}^2\\
 &\leq 3\tilde C_2 A\inner{A^{|\alpha|-1}}^2\com{(|\alpha|-1)!}^{\sigma}
 \com{(|\alpha|-2)!}^{\sigma}\abs\alpha^\sigma\\
 &\leq 3\tilde
 C_2A\set{A^{|\alpha|-1}\com{(|\alpha|-1)!}^{\sigma}}^2.
\end{align*}
This, combined with \reff{S1}-\reff{S4}, implies that
\begin{align*}
  \norm{\p^\alpha
  f(t)}_{L^2}^2
  -\norm{\p^\alpha
  f(0)}_{L^2}^2\leq C_3A
  \set{A^{|\alpha|-1}\com{(|\alpha|)!}^{\sigma}}^2 ,\quad \forall~t\in[0,T].
\end{align*}
Since $f(0)=f_0\in G^\sigma(\Real^3),$ then there exists a constant
$L$ such that
\[\norm{\p^\alpha
  f(0)}_{L^2}^2\leq \set{L^{\abs\alpha}\com{(\abs\alpha-1)!}^\sigma}^2.
\]
Thus taking $A$ large enough, we can deduce that
\[\norm{\p^\alpha
  f(t)}_{L^2}^2\leq \set{{1\over2}A^{\abs\alpha}\com{(\abs\alpha-1)!}^\sigma}^{2},\quad
  \mbox{for all}
  ~t\in[0,T], \mbox{ and }|\alpha|=k.
\]
This gives the proof of the inequality \reff{first term}.

Finally, we need to prove that
\begin{equation}\label{second term}
 \int_ 0^{T}\|\nabla_v \p^{\tilde\alpha}f(t)\|_{L^2_{\gamma}}^2dt\leq
\set{{1\over2}A^{\abs{\tilde\alpha}}\com{(\abs{\tilde\alpha}-1)!}^\sigma}^2,
\qquad \forall~\abs{\tilde\alpha}=k.
\end{equation}
The proof of the estimate \reff{second term} is similar to that of
\reff{first term}. Let us apply Lemma \ref{crucial} again with
$\mu=\tilde\alpha$. Then we have
\begin{align*}
  &\partial_t\norm{\p^{\tilde\alpha} f(t)}_{L^2}^2+C_1\norm{\nabla_v\p^{\tilde\alpha}  f }_{L^2_\gamma}^2
  \leq C_2\abs{\tilde\alpha}^2\norm{\nabla_v\p^{\abs{\tilde\alpha}-1} f}_{L^2_\gamma}^2\\
  &+C_2 \sum_{2\leq\abs\beta\leq\abs{\tilde\alpha}}
  C_{{\tilde\alpha} }^\beta \norm{\nabla_v\p^{\abs{\tilde\alpha} -\abs\beta+1} f}_{L^2_\gamma}
  \cdot\norm{\nabla_v\p^{\abs{\tilde\alpha}-1}
  f}_{L^2_\gamma}\cdot\com{G_\sigma(f(t))}_{\abs\beta-2}\\
  &+C_2\sum_{0\leq\abs\beta\leq\abs{\tilde\alpha}}
   C_{{\tilde\alpha} }^\beta\norm{
  \p^\beta f}_{L^2_\gamma}\cdot\norm{\nabla_v\p^{\abs{\tilde\alpha}-1} f}_{L^2_\gamma}
  \cdot\com{G_\sigma(f(t))}_{\abs{{\tilde\alpha}}-\abs\beta} \\
  &\stackrel{{\rm def}}{=}\mathcal {N}(t).
\end{align*}
Integrating the above inequality over the interval $[0,T]$, we then
have
\[
  C_1\int_{0}^{T}\norm{\nabla_v\p^{\tilde\alpha}f(s)}_{L^2_{\gamma}}^2ds\leq  \norm{\p^{\tilde\alpha}
  f(0)}_{L^2}^2+\int_{0}^{T}\mathcal {N}(s)ds.
\]
By a similar argument as in the proof of \reff{first term},  one has
\[
  \norm{\p^{\tilde\alpha}
  f(0)}_{L^2}^2+ \int_{0}^{T}\mathcal {N}(s)ds\leq C_1
  \set{{1\over2}A^{\abs{\tilde\alpha}}\com{(\abs{\tilde\alpha}-1)!}^\sigma}^2,
\]
which gives the estimate \reff{second term}. The validity of $(Q)_k$
can be derived directly by the estimates in \reff{first term} and
\reff{second term}.
\end{proof}

\medskip

\medskip


\begin{thebibliography}{10}

\bibitem{chen-li-xu}
\newblock H. Chen, W.-X. Li, and C.-J. Xu,
\newblock \emph{The Gevrey Hypoellipticity for linear and non-linear
Fokker-Planck equations},
\newblock accepted by J. Differential Equations
(2008).


\bibitem{C-D-H}
\newblock Y.M. Chen, L.~Desvillettes, and L.B. He,
\newblock \emph{Smoothing effects for
classic
  solutions of the full {L}andau equation},
\newblock to appear in Arch. Ration. Mech.
  Anal.

\bibitem{D92}
\newblock L.~Desvillettes,
\newblock \emph{On asymptotics of the {B}oltzmann equation
when the
  collisions become grazing},
\newblock Transport Theory Statist. Phys. \textbf{21}
  (1992), 259--276.

\bibitem{des-fur-ter}
\newblock L.~Desvillettes, G.~Furioli, and E.~Terraneo,
\newblock \emph{Propagation of
{G}evrey
  regularity for solutions of the {B}oltzmann equation for {M}axwellian
  molecules},
\newblock To appear in Transactions of the American Mathematical
  Society.


\bibitem{DM}
\newblock L.~Desvillettes and C.~Mouhot,
\newblock \emph{About {$L\sp p$} estimates for
the
  spatially homogeneous {B}oltzmann equation},
\newblock Ann. Inst. H. Poincar\'e Anal.
  Non Lin\'eaire \textbf{22} (2005), 127--142.

\bibitem{DV}
\newblock L.~Desvillettes and C.~Villani,
\newblock\emph{On the spatially homogeneous
{L}andau
  equation for hard potentials. {I}. {E}xistence, uniqueness and smoothness},
\newblock Comm. Partial Differential Equations \textbf{25} (2000), 179--259.


\bibitem{G}
\newblock Y.~Guo,
\newblock \emph{The {L}andau equation in a periodic box},
\newblock Comm. Math.
Phys.
  \textbf{231} (2002), 391--434.

\bibitem{Gb}
\newblock T.~Gustafsson,
\newblock \emph{{$L\sp p$}-estimates for the nonlinear
spatially
  homogeneous {B}oltzmann equation},
\newblock Arch. Rational Mech. Anal. \textbf{92}
  (1986), 23--57.

\bibitem{Ga}
\newblock T.~Gustafsson,
\newblock \emph{Global {$L\sp p$}-properties for the spatially
homogeneous
  {B}oltzmann equation},
\newblock Arch. Rational Mech. Anal. \textbf{103} (1988),
  1--38.

\bibitem{HN}
\newblock B.~Helffer and Nier F,
\newblock ``Hypoelliptic estimates and spectral
theory for
  {F}okker-{P}lanck operators and {W}itten {L}aplacians",
\newblock Lecture Notes in
  Mathematics, vol. 1862, Springer-Verlag, Berlin, 2005.

\bibitem{HF}
\newblock F. H{\'e}rau  and F.~Nier,
\newblock\emph{Isotropic hypoellipticity and trend
to
  equilibrium for the {F}okker-{P}lanck equation with a high-degree potential},
\newblock  Arch. Ration. Mech. Anal. \textbf{171} (2004),  151--218.

\bibitem{MX}
\newblock Y.~Morimoto and C.-J. Xu,
\newblock \emph{Hypoellipticity for a class of
kinetic
  equations},
  \newblock J. Math. Kyoto Univ. \textbf{47} (2007), 129--152.

\bibitem{MV}
\newblock C.~Mouhot and C.~Villani,
\newblock\emph{Regularity theory for the spatially
homogeneous
  {B}oltzmann equation with cut-off},
  \newblock Arch. Ration. Mech. Anal. \textbf{173}
  (2004), 169--212.

\bibitem{Rodino93}
\newblock L.~Rodino,
\newblock ``Linear {P}artial {D}ifferential {O}perators in
{G}evrey spaces",
\newblock World Scientific Publishing Co. Inc., River Edge, NJ, 1993.

\bibitem{Ukai84}
S.~Ukai, \emph{Local solutions in {G}evrey classes to the nonlinear
{B}oltzmann
  equation without cutoff}, Japan J. Appl. Math. \textbf{1} (1984).

\bibitem{Villani98b}
\newblock C.~Villani,
\newblock \emph{On a new class of weak solutions to the spatially
homogeneous
  {B}oltzmann and {L}andau equations},
\newblock Arch. Rational Mech. Anal. \textbf{143}
  (1998), 273--307.

\bibitem{Villani98}
\newblock C.~Villani,
\newblock \emph{On the spatially homogeneous {L}andau equation for
{M}axwellian
  molecules},
\newblock Math. Models Methods Appl. Sci. \textbf{8} (1998),
  957--983.

\end{thebibliography}
\end{document}